\documentclass[11pt, english]{smfart}
\usepackage[T1]{fontenc}
\usepackage[english,francais]{babel}
\usepackage{graphicx}
\usepackage{hyperref}
\usepackage{amssymb}
\usepackage{mathtools}
\usepackage{tikz}
\usepackage{tikz-cd} 
\usepackage{amsmath}
\usetikzlibrary{matrix}
\usepackage{smfthm}
\usepackage{mathrsfs}
\usepackage{pb-diagram}
\usepackage{yfonts}
\usepackage[inline]{enumitem}
\usepackage{color}
\usepackage{verbatim}
\usepackage{bbm}
\usetikzlibrary{matrix}
\tolerance 400

\pretolerance 200

\def\R{\mathbb{R}}
\def\Q{\mathbb{Q}}

\def\N{\mathbb{N}}
\def\O{\mathscr{O}}

\def\nm{\lVert\cdot\rVert}

\def\deg{{\mathrm{deg}}}
\def\vol{\widehat{\mathrm{vol}}}

\def\mumin{\widehat{\mu}_{\min}}
\def\mumax{\widehat\mu_{\max}}

\def\limnto{\lim\limits_{n\rightarrow +\infty}}

\def\ot{\otimes}
\def\shfF{\mathscr{F}}
\def\shfG{\mathscr{G}}
\def\desF{\mathscr{F}_{\mathrm{des}}}
\def\shfL{\mathscr{L}}
\def\shfM{\mathscr{M}}
\def\shO{\mathscr{O}}

\def\ardeg{\widehat{\mathrm{deg}}}

\def\Td{\mathrm{Td}}
\def\ch{\mathrm{ch}}
\def\degH{\mathrm{deg}_{\mathcal{H}}}
\def\rank{\mathrm{rk}}
\def\ovl{\overline}
\def\hdX{\mathfrak{X}}
\def\rig{\mathrm{rig}}
\def\scrX{\mathscr{X}}

\def\scrA{\mathscr{A}}

\def\hd{\mathfrak}
\newcommand{\rest}[2]{\left.{#1}\right\vert_{{#2}}}

\makeatletter
\newcommand\tint{\mathop{\mathpalette\tb@int{t}}\!\int}
\newcommand\bint{\mathop{\mathpalette\tb@int{b}}\!\int}
\newcommand\tb@int[2]{%
  \sbox\z@{$\m@th#1\int$}%
  \if#2t%
    \rlap{\hbox to\wd\z@{%
      \hfil
      \vrule width .35em height \dimexpr\ht\z@+1.4pt\relax depth -\dimexpr\ht\z@+1pt\relax
      \kern.05em 
    }}
  \else
    \rlap{\hbox to\wd\z@{%
      \vrule width .35em height -\dimexpr\dp\z@+1pt\relax depth \dimexpr\dp\z@+1.4pt\relax
      \hfil
    }}
  \fi
}
\makeatother
\begin{document}
\title{A relative bigness inequality and equidistribution theorem over function fields}
\author{Wenbin Luo\thanks{
This work was supported by JSPS KAKENHI Grant Number JP20J20125.}}
\email{w.luo@math.kyoto-u.ac.jp}
\address{Department of Mathematics, Graduate School of Science, Kyoto university, Kyoto 606-8502, Japan}
\date{\today}
\begin{abstract}For any line bundle written as a subtraction of two ample line bundles, Siu's inequality gives a criterion on its bigness. We generalize this inequality to a relative case. The arithmetic meaning behind the inequality leads to its application on algebraic dynamic systems, which is the equidistribution theorem of generic and small net of subvarieties over a function field.
\end{abstract} 

\maketitle
\tableofcontents

\section{Introduction}
\subsection{Background}
The equidistribution theorem for generic small points is a crucial part in the proof of the Bogomolov conjecture. In the case of number fields, it was proved by Szpiro, Ullmo and Zhang \cite{SUZ,ullmo1998positivite,zhang1998equidistribution} by using the arithmetic Hilbert-Samuel theorem. In \cite{moriwaki2000arithmetic}, Moriwaki introduced the height theory over finitely generated fields, based on which he proved an arithmetic relative version of equidistribution theorem. In \cite{Yuan_2008}, Yuan reveal the relationship between arithmetic bigness and equidistribution theorem over number fields. In the non-Archimedean case, such a equidistribution can be considered with respect to the Chamber-Loir measures \cite{Chambert_Loir_2006}. Gubler and Faber independently transferred the result of Yuan to the case over function fields \cite{gubler2008equidistribution,Faber_2009}. In this paper, we prove an equivalent result by implementing a relative version of Siu's inequality on bigness. 
\subsection{Relative Siu's inequality}
Let $X$ be a projective variety of dimension $d$, and $L$ and $M$ be nef line bundles. Then $$\mathrm{vol}(L\ot M^\vee)\geq c_1(L)^d-d\cdot c_1(L)^{d-1}c_1(M)$$ 
due to Siu's inequality \cite[Theorem 2.2.15]{Positivity}.
It is very natural to consider its relative version. An essential problem is to find a good analogy of $h^0(\cdot)$. Let $k$ be a field of characteristic $0$. We fix a projective and normal $k$-variety $Y$ of dimension $e>0$, and a family of ample line bundles $\mathcal H=\{H_1,\dots,H_{e-1}\}$ on $Y$. For any coherent sheaf $\shfF$ on $Y$, we define the degree $\degH(\shfF)$ of $\shfF$ by $$\degH(\shfF):=c_1(\shfF)\cdot c_1(H_1) \cdots c_1(H_{e-1}).$$
Then we can give an analogous bigness inequality
\begin{theo}[cf. Theorem \ref{ineq_rel_big}]
Let $\pi:\scrX\rightarrow Y$ be a projective and surjective morphism of projective $k$-varieties, where $\scrX$ is of dimension $d+e(d>0)$.  For any cycle $\alpha$ on $\scrX$, we denote by $\alpha\cdot\pi^*(\mathcal H)$ the intersection $$\alpha\cdot c_1(\pi^*H_1)\cdots c_1(\pi^*H_{e-1}).$$ 
Let $\mathscr E$, $\shfL$ and $\shfM$ be line bundles over $\scrX$ such that $\shfL$ and $\shfM$ are nef.
Then it holds that
$$
    \begin{aligned}
        \degH(&\pi_*(\mathscr{E}\otimes (\shfL\ot \shfM^\vee)^{\ot n}))\geq \\
             &\frac{c_1(\shfL)^{d+1}\cdot\pi^*(\mathcal{H})-(d+1)c_1(\shfL)^{d}c_1(\shfM)\cdot\pi^*(\mathcal{H})}{(d+1)!}n^{d+1}+o(n^{d+1}).
    \end{aligned}
$$
\end{theo}
\subsection{Intersection theory and Chambert-Loir measure}
Let $Y^{(1)}$ be the set of points of codimension $1$ in $Y$. 
For any $\omega\in Y^{(1)}$, given by the order $\mathrm{ord}_{\omega}(\cdot)$, we can construct a non-Archimedean absolute value $\lvert\cdot\rvert_\omega$ on the function field $K:=k(Y)$. Let $K_\omega$ denote the completion of $K$ with respect to $\lvert\cdot\rvert_\omega$. Let $X$ be a $K$-projective variety of dimension $d$. 
We denote by $X^{\mathrm{an}}_\omega$ the analytification of $X_\omega:=X\times \mathrm{Spec}K_\omega$ with respect to $\lvert\cdot\rvert_\omega$ in the sense of Berkovich. An adelic line bundle is a pair $\overline L=(L,\{\lvert\cdot\rvert_{\varphi,\omega}\}_{\omega\in Y^{(1)}})$ where $L$ is a line bundle on $X$ and $\lvert\cdot\rvert_{\varphi,\omega}$ is a continuous metric of the analytification of $L_\omega:=L\otimes K_\omega$ on $X^{\mathrm{an}}_\omega$. An adelic line bundle $\overline L$ is said to be semipositive if there exists a sequence $\{(\pi_n:\scrX_n\rightarrow Y,\shfL_n,l_n)\}$ where each $(\scrX_n,\shfL_n)$ is a $Y$-model of $(X,L^{\ot l_n})$ with $\shfL_n$ being relatively nef, such that $\overline L$ can be viewed as a limit of $\shfL_n^{\ot 1/l_n}$ as described in subsection \ref{subsect_adelic_line_bund}. We say $\overline L$ is integrable if it is a subtraction of two semipositive ones. We can thus define an intersection theory for integrable adelic line bundles by taking a limit of classical intersections (see subsection \ref{subsect_arith_inter}). For a semipositive adelic line bundle $\overline L$, this intersection theory allows us to define the Chambert-Loir measure $\widehat c_1(\overline L)^d_\omega$ on each $X^{\mathrm{an}}_\omega$, which can be normalized to a probability measure $\mu_{\overline L,\omega}$. If $Z$ is a closed subvariety of $X$, then we can similarly construct a restricted probability measure $\mu_{\overline L, Z,\omega}$ supported on $Z^{\mathrm{an}}_\omega$. 
\subsection{Equidistribution of subvarieties}
Let $I$ be a directed set. A \textit{net} of subvarieties $\{Z_\iota\}_{\iota\in I}$ is a set indexed by $I$. We say it is generic if the subvarieties do not accumulate in any proper closed subset of $X$. We say $\{Z_\iota\}_{\iota\in I}$ is small if the heights $h_{\overline L}(Z_\iota)$ converge to $0$ (see subsection \ref{subsect_arith_inter} for the definition of height functions). Then our relative bigness inequality gives the following equidistribution theorem:
\begin{theo}[cf. Theorem \ref{theo_equidistribution}] Let $\overline L$ be an adelic line bundle such that $\overline L$ is a limit of nef line bundles, $L$ is ample and $\widehat{c}_1(\overline L)^{d+1}=0$.
Let $\{Z_\iota\}_{\iota\in I}$ be a generic net of subvarieties of $X$ such that $\lim\limits_{\iota\in I} h_{\overline L}(Z_\iota)=0.$ Then for any $\omega\in Y^{(1)}$, $\{Z_\iota\}$ is equidistributed in the sense that $$\int_{X_\omega^{\mathrm{an}}}f \mu_{\overline L,\omega}=\lim\limits_{\iota\in I} \int_{X_\omega^{\mathrm{an}}}f \mu_{\overline L,Z_\iota,\omega}$$
for any continuous function $f:X^{\mathrm{an}}_\omega\rightarrow \R$.
\end{theo}
\subsection{Adelic arithmetic over function fields}
Let $X$ be a normal projective variety. Let $\overline L$ be a semipositive adelic line bundle defined by a sequence $\{(\pi_n:\scrX_n\rightarrow Y,\shfL_n,l_n)\}$. As $\shfL_n$ converge to $\overline L$, each coherent sheaves $(\pi_n)_*\shfL_n$ can be viewed as an adelic vector bundle , which is roughly a vector space $H^0(X,L)$ equipped with a collection of norms parametrized by $Y^{(1)}$. For the details of adelic vector bundle, we refer the reader to \cite[\S 4]{adelic} or subsection \ref{subsect_def_adlcv}. For an adelic vector bundle, we can define its degree, slope, and so on, which coincide with geometric ones when the adelic vector bundle comes from a torsion-free coherent sheaf. A comparison result given by Theorem \ref{theo_comparison_lattice_sup} makes it possible to use some adelic results, especially Proposition \ref{prop_min_slop_nef} is frequently used in our proof of the relative bigness inequality. 
\subsection{Comparison with other results}
We can find that all proofs of equidistribution theorems in \cite{Yuan_2008,gubler2008equidistribution,Faber_2009} following the same discipline in essence. Actually, in a recent work of Yuan and Zhang \cite{yuan2021adelic}, they integrated the results over number fields and function fields of a regular curve. In section \ref{sec_adelic_fun}, we deal with function fields of higher dimensional varieties as a very particular case in the framework of Chen and Moriwaki\cite{adelic}. This article shows the relationship between the theory based on integral models and the adelic point of view. Moreover, the relative version of Siu's inequality gives a novel approach to the equidistribution theorem over function fields.
\subsection{Organization of the paper}
In section \ref{sec_pre_alg_geo}, we talk about some algebro-geometric results over a polarised projective varieties, which are preliminaries to the relative bigness inequality.
Section \ref{sec_adelic_fun} include some crucial adelic results over function fields. Section \ref{sec_arak_int} gives a description of arakelov intersection theory and height functions, which are closely related to the Chambert-Loir measure.
In section \ref{sec_rel_big}, we prove the relative bigness inequality.
Section \ref{sec_equi} contains a reproof of the equidistribution theorem over function fields.
\section*{Acknowledgement}
I wish to show my appreciation to my advisor Atsushi Moriwaki who proposed this project and gave a lot of valuable suggestions. I would like to thank Fanjun Meng for some helpful discussions in algebraic geometry.
\section{Notation and conventions}
\subsection{A polarised variety}\label{subsection_pol_var}Throughout this article, let $k$ be a field of characteristic $0$, $Y$ be an $e$-dimensional normal projective $k$-variety $(e>0)$. Let $K$ be the function field $k(Y)$ of $Y$. Denote by $Y^{(1)}$ the set of points of codimension $1$ in $Y$. For each $\nu\in Y^{(1)}$, let $\lvert\cdot\rvert_\nu$ denote the non-Archimedean absolute value on $K$ satisfying $$\lvert f\rvert_\nu:=e^{-\mathrm{ord}_\nu(f)}$$
for $f\in K\setminus\{0\}$.
We equip $Y$ with a polarization by a collection of ample line bundles $\mathcal H:=\{H_1,\dots,H_{e-1}\}$. For any cycle $Z$ of codimension $1$, we define the degree by $\degH(Z)=c_1(H_1)\cdots c_1(H_{e-1})\cdot Z$. In particular, for any $\nu\in Y^{(1)}$, we denote by $\degH(\nu)$ the degree $\degH(\overline{\{\nu\}})$.
Let $\pi:\scrX\rightarrow Y$ be a proper morphism of $k$-varieties. For any $\alpha\in A_k(\scrX)$, let $\alpha \cdot \pi^*\mathcal H$ denote the intersection $\alpha \cdot c_1(\pi^* H_1) \cdots c_1(\pi^* H_{e-1})\in A_{k-e+1}(\scrX)$. Note that if $k=e-1$, then $\alpha\cdot \pi^*\mathcal H=\degH(\pi_*\alpha)$.

\subsection{$Y$-model}
Let $X$ be a projective $K$-variety. We say $\scrX$ is a $Y$\textit{-model} of $X$, if there exists a surjective and projective morphism $\pi:\scrX\rightarrow Y$ of $k$-varieties such that $\scrX_K=X$, where $\scrX_K$ is the generic fiber of $\scrX$. Let $\shfL$ be a line bundle on $\scrX$, we denote by $\shfL_K$ the pull-back of $\shfL$ under $\scrX_K\rightarrow \scrX$. Let $L$ be a line bundle on $X$. We say $(\pi:\scrX\rightarrow X,\shfL)$ is a $Y$-model of $(X,L)$ if $\scrX_K=X$ and $\shfL_K=L$. We just say $(\scrX,\shfL)$ is a model of $(X,L)$ if there is no ambiguity.

\subsection{Horizontal and vertical divisors}\label{sect_vert_div}We say a prime Weil divisor $D$ on $\scrX$ is \textit{horizontal} if $\pi|_{D}:D\rightarrow Y$ is dominant. We say $D$ is \textit{vertical} if $\pi(D)$ is of codimension $1$ in $Y$. In general, for a Weil divisor $D$, we can consider a decomposition 
$$D=D_h+D_1+D_2$$
where $D_h$ is the horizontal part, $D_1$ is the vertical part, and $D_2$ is of components whose image is of codimension at least $2$. In this paper, $D_2$ is of little interest since we always consider the degree of the push-forward to $Y$. To be more precise, let $\alpha$ be a $e$-dimensional cycle on $\scrX$, then $$\degH(\pi_* (\alpha\cdot D))=\degH(\pi_*(\alpha\cdot(D_h+D_1))).$$

\section{Preliminaries on algebraic geometry}\label{sec_pre_alg_geo}

Throughout this section, let $\pi:\scrX\rightarrow Y$ be a surjective and projective morphism of projective $k$-varieties where $\scrX$ is of dimension $d+e$ ($d \in \N$). 

\subsection{Degree of coherent sheaves}\label{subsec_degree}  Let $U$ be the regular locus of $Y$. 
As $Y$ is normal, $U$'s complement is of codimension at least $2$, and hence $A_{e-1}(U)\simeq A_{e-1}(Y)$. For any coherent sheaf $\shfF$, we define its first Chern class by $c_1(\shfF):=c_1(\shfF|_U)$.
\begin{defi}
The degree of a coherent sheaf $\shfF$ with respect to $\mathcal{H}$ (degree for short in the following content) is defined  by 
$$\degH(\shfF):=c_1(H_1)\cdots c_1(H_{e-1})\cdot c_1(\shfF).$$
If the support of $\shfF$ is of dimension $e$, the slope is defined by $$\mu(\shfF):=\frac{\degH(\shfF)}{\mathrm{rk}(\shfF)}$$
where the $\mathrm{rk}(\shfF)$ is the rank of $\shfF$.
If the support $\shfF$ is of dimension $<e$, we define the slope by convention $\mu(\shfF):=+\infty$.
\end{defi}
\begin{rema}\label{rema_pos_tor}
For a torsion sheaf $\shfF$, namely a sheaf with $\dim \mathrm{Supp}(\shfF)<e$, its degree can be calculated by 
$$\degH(\shfF)=\sum\limits_{\omega\in Y^{(1)}}\mathrm{length}_{\O_{Y,\omega}}(\shfF_\omega)\cdot \degH(\omega)$$
Hence any torsion sheaf is of non-negative degree.
\end{rema}
 Similarly as the classical theory, we can consider the Harder-Narasimhan filtration, which is guaranteed by the existence of destabilizing sheaf. 
\begin{theo}Let $\shfF$ be a torsion-free coherent sheaf. There exists a unique non-trivial subsheaf $\desF$ of $\shfF$ such that
\begin{itemize}
    \item[\textnormal{(1)}] $\mu(\desF)$ is maximal among all subsheaves of $\shfF$.
    \item[\textnormal{(2)}] if there is a subsheaf $\shfG\subset\shfF$ such that $\mu(\shfG)=\mu(\desF)$ then $\shfG\subset\desF$. 
\end{itemize}
\end{theo}
\begin{proof}
See \cite[Lemma 1.3.5]{huy2010}.
\end{proof}
\begin{coro}
Let $\shfF$ be a torsion-free coherent sheaf. Then $\shfF/\shfF_{\mathrm{des}}$ is torsion-free or zero. In particular, if $\rank(\shfF)=1$, then $\shfF=\desF$.
\end{coro}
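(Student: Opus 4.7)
The plan is to argue by contradiction, using the torsion subsheaf of $\shfF/\desF$ to pull back a subsheaf of $\shfF$ strictly larger than $\desF$ but of the same slope, contradicting property (2) of the destabilizing sheaf. Concretely, write $\mathscr{T}\subset \shfF/\desF$ for the torsion subsheaf and let $\shfG:=q^{-1}(\mathscr{T})\subset \shfF$, where $q:\shfF\to \shfF/\desF$ is the quotient. Then $\desF\subset \shfG$ and there is a short exact sequence
$$0\longrightarrow \desF\longrightarrow \shfG\longrightarrow \mathscr{T}\longrightarrow 0.$$

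Next I would exploit additivity of rank and of degree in short exact sequences. Since $\mathscr{T}$ is torsion, $\rank(\shfG)=\rank(\desF)$, and by Remark \ref{rema_pos_tor} we have $\degH(\mathscr{T})\geq 0$, so
$$\degH(\shfG)=\degH(\desF)+\degH(\mathscr{T})\geq \degH(\desF).$$
Dividing by the common rank gives $\mu(\shfG)\geq \mu(\desF)$. By the maximality property (1) this forces $\mu(\shfG)=\mu(\desF)$, and then property (2) yields $\shfG\subset \desF$. Combined with $\desF\subset \shfG$ this gives $\shfG=\desF$, hence $\mathscr{T}=0$ and $\shfF/\desF$ is torsion-free (or zero).

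For the rank-one case, note that $\desF$ is a non-zero subsheaf of the torsion-free sheaf $\shfF$, so $\desF$ is itself torsion-free of rank $\leq 1$, hence of rank exactly $1$. Then $\shfF/\desF$ has rank $0$, and being torsion-free by the previous step it must vanish, i.e. $\shfF=\desF$.

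The only real obstacle is the bookkeeping around the rank-zero sheaf $\mathscr{T}$: one has to be sure that $\degH$ is additive in the short exact sequence above even though $\mathscr{T}$ is supported in codimension $\geq 1$. This follows from restricting everything to the regular locus $U\subset Y$ used in subsection \ref{subsec_degree} (so that $c_1$ is well-behaved) together with the torsion computation of Remark \ref{rema_pos_tor}; no further ingredient is required.
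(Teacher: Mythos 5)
Your proof takes essentially the same route as the paper: take the preimage $\shfG$ of the torsion part $\mathscr T\subset\shfF/\desF$, observe $\degH(\shfG)=\degH(\desF)+\degH(\mathscr T)$ with $\degH(\mathscr T)\geqslant 0$ by Remark~\ref{rema_pos_tor}, and conclude. The one place where you and the paper diverge is worth flagging: the paper asserts the \emph{strict} inequality $\mu(\shfF')>\mu(\desF)$ and contradicts property~(1) alone, which implicitly requires $\degH(\mathscr T)>0$. That can fail if $\mathscr T$ is nonzero but supported in codimension $\geqslant 2$, in which case $\degH(\mathscr T)=0$. You instead derive $\mu(\shfG)\geqslant\mu(\desF)$, force equality by maximality, and then invoke property~(2) to get $\shfG\subset\desF$, hence $\shfG=\desF$ and $\mathscr T=0$. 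This closes the codimension-$\geqslant 2$ loophole cleanly, so your version is actually a bit more careful than the paper's one-line argument. (The paper's wording is defensible if one reads everything in the quotient category $\mathrm{Coh}_{e,e-1}(Y)$ mentioned just after the corollary, where such torsion is invisible, but as written the strict inequality is not quite justified.) Your treatment of the rank-one case is correct and fills in a step the paper leaves implicit.
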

\begin{proof}
Suppose that $\shfF/\shfF_{\mathrm{des}}$ has the non-zero torsion part $T(\shfF/\shfF_{\mathrm{des}})$. Let $\shfF'\subset \shfF$ be the preimage of $T(\shfF/\shfF_{\mathrm{des}})$. Then $\shfF_{\mathrm{des}}\subsetneq \shfF'$ and $$\mu(\shfF')=\frac{\degH(\shfF_{\mathrm{des}})+\degH(\shfF'/\shfF_{\mathrm{des}})}{\mathrm{rk}(\shfF_{\mathrm{des}})}{>} \mu(\shfF_{\mathrm{des}}),$$
which contradicts to the maximality of $\desF$.
\end{proof}
\begin{defi}
We say that a coherent sheaf $\shfF$ is $\mu$-semistable if $\shfF=\shfF_{\mathrm{des}}$. Let $\shfF$ be a torsion-free coherent sheaf. A Harder-Narasimhan filtration of $\shfF$ is a chain of subsheaves
$$0=\shfF_0\subsetneq \shfF_1\subsetneq \shfF_2\subsetneq\cdots\subsetneq \shfF_l=\shfF$$
such that each $\shfF_i/\shfF_{i-1}(1\leqslant i\leqslant l)$ is a $\mu$-semistable sheaf and we have a sequence of descending rational numbers:
$$\mu(\shfF_1)>\mu(\shfF_2/\shfF_1)>\cdots >\mu(\shfF_l/\shfF_{l-1}).$$
\end{defi}

We can show that for any torsion-free coherent sheaf $\shfF$, its Harder-Narasimhan filtration exists uniquely \cite[Theorem 1.3.4]{huy2010}.
We define the \textit{maximal slope} $\mu_{\max}(\shfF)$ and the \textit{minimal slope} $\mu_{\min}(\shfF)$ as the the maximal and minimal ones in the descending sequence of slopes associated to the Harder-Narasimhan filtration. It is obvious that $\mu_{\max}(\shfF)=\mu(\desF)$ due to the construction of Harder-Narasimhan filtration. For the minimal slope, we can show the following:

\begin{prop}
Let $\shfF$ be a torsion-free coherent sheaf. Then $\mu_{\min}(\shfF)$ equals to the following two
\begin{itemize}
    \item[\textnormal{(1)}] $\inf\{\mu(\shfG)\mid \shfG\text{ is a quotient sheaf of }\shfF\}$
    \item[\textnormal{(2)}] $\inf\{\mu(\shfG)\mid \shfG\text{ is a torsion-free quotient sheaf of }\shfF\}$
\end{itemize}
\end{prop}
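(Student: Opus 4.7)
The strategy is to establish $\mu_{\min}(\shfF) = \inf_{(1)} = \inf_{(2)}$ by sandwiching: I will show $\inf_{(2)} \leq \mu_{\min}(\shfF) \leq \inf_{(1)}$ and then conclude by noting that $\inf_{(1)} \leq \inf_{(2)}$ trivially, since torsion-free quotients form a sub-collection of all quotients.

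For $\inf_{(2)} \leq \mu_{\min}(\shfF)$, consider the Harder--Narasimhan filtration $0 = \shfF_0 \subsetneq \cdots \subsetneq \shfF_l = \shfF$. By induction on $i$, each $\shfF/\shfF_i$ is torsion-free: the base case is the hypothesis, and the inductive step applies the preceding Corollary to $\shfF/\shfF_{i-1}$, whose destabilizing subsheaf is exactly $\shfF_i/\shfF_{i-1}$. In particular $\shfF/\shfF_{l-1}$ is a torsion-free quotient of $\shfF$ whose slope is $\mu_{\min}(\shfF)$.

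For $\mu_{\min}(\shfF) \leq \inf_{(1)}$, given a non-zero quotient $q:\shfF \twoheadrightarrow \shfG$, I transfer the filtration by $\shfG_i := q(\shfF_i)$, obtaining canonical surjections $\shfF_i/\shfF_{i-1} \twoheadrightarrow \shfG_i/\shfG_{i-1}$. Since $\shfF_i/\shfF_{i-1}$ is $\mu$-semistable of slope $\mu_i \geq \mu_{\min}(\shfF)$, an auxiliary lemma (discussed below) gives $\degH(\shfG_i/\shfG_{i-1}) \geq \mu_{\min}(\shfF) \cdot \rank(\shfG_i/\shfG_{i-1})$ in every case, including when $\shfG_i/\shfG_{i-1}$ has torsion. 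Telescoping via the additivity of rank and degree in short exact sequences yields $\degH(\shfG) \geq \mu_{\min}(\shfF) \cdot \rank(\shfG)$, which gives $\mu(\shfG) \geq \mu_{\min}(\shfF)$ (with the convention $\mu(\shfG) = +\infty$ when $\rank(\shfG) = 0$).

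The main obstacle is the auxiliary claim that any quotient $\mathcal{Q}$ of a $\mu$-semistable torsion-free sheaf $\mathcal{E}$ satisfies $\degH(\mathcal{Q}) \geq \mu(\mathcal{E}) \cdot \rank(\mathcal{Q})$, without assuming $\mathcal{Q}$ torsion-free. I would argue via the short exact sequence $0 \to T(\mathcal{Q}) \to \mathcal{Q} \to \mathcal{Q}/T(\mathcal{Q}) \to 0$: the $\mu$-semistability of $\mathcal{E}$ bounds the slope of the torsion-free quotient $\mathcal{Q}/T(\mathcal{Q})$ from below by $\mu(\mathcal{E})$, while Remark \ref{rema_pos_tor} ensures $\degH(T(\mathcal{Q})) \geq 0$. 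Since rank is insensitive to torsion, combining these pieces yields the required inequality. The non-negativity of torsion degree, a feature specific to the polarised higher-dimensional base, is precisely what allows the possibly non-torsion-free subquotients $\shfG_i/\shfG_{i-1}$ to be absorbed cleanly into the telescoping argument, and is the one input that distinguishes this setting from the classical curve case.
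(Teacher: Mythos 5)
Your proposal follows essentially the same route as the paper's proof: Harder--Narasimhan filtration, images of the filtration steps in the quotient, semistability of the graded pieces, and the non-negativity of torsion degree (Remark \ref{rema_pos_tor}). You organize it a bit more cleanly — you make explicit both the $\inf_{(2)} \leqslant \mu_{\min}(\shfF)$ direction (that $\shfF/\shfF_{l-1}$ is itself a torsion-free quotient of slope $\mu_{\min}(\shfF)$, which the paper leaves implicit in the definition) and the absorption of possibly-torsion subquotients $\shfG_i/\shfG_{i-1}$ via a separately stated auxiliary lemma, whereas the paper folds the latter into showing $\inf_{(1)} = \inf_{(2)}$ first.

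There is one step you treat as given that the paper does not: in your auxiliary claim you assert that $\mu$-semistability of $\mathcal E$ bounds the slope of every torsion-free quotient $\mathcal Q/T(\mathcal Q)$ from below by $\mu(\mathcal E)$. In this paper's setup $\mu$-semistability is \emph{defined} by $\shfF = \shfF_{\mathrm{des}}$, i.e., via a maximality condition on subsheaves, not quotients; the equivalence with the quotient-side bound is exactly what is being established here. So you still owe the short kernel computation: if $\shfF' = \ker(\mathcal E \twoheadrightarrow \mathcal Q/T(\mathcal Q))$, then $\mu(\shfF') \leqslant \mu(\mathcal E)$ by semistability, and additivity of degree and rank gives
\[
\mu(\mathcal Q/T(\mathcal Q)) = \mu(\mathcal E) + \bigl(\mu(\mathcal E)-\mu(\shfF')\bigr)\frac{\rank(\shfF')}{\rank(\mathcal Q/T(\mathcal Q))} \geqslant \mu(\mathcal E),
\]
which is precisely the computation the paper carries out in the semistable case. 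With that line inserted, your argument is complete and matches the paper's in substance.
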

\begin{proof}
We first show that (1) and (2) are equal. Let $\shfG$ be a quotient sheaf of $\shfF$. If $\shfG$ is torsion, then $\mu(\shfG)=+\infty$. Hence we may assume that the support of $\shfG$ is of dimension $e$. Let $T(\shfG)\subset \shfG$ be the torsion part of $\shfG$. By the non-negativity of torsion sheaf, it follows that $\mu(\shfG/T(\shfG))\leqslant \mu(\shfG)$, which shows that (2) and (1) are equal. 
Now suppose that there exists a torsion-free quotient sheaf $\shfG$ such that $\mu(\shfG)<\mu_{\min}(\shfF)$. If $\shfF$ is $\mu$-semistable, then $\mu_{\min}(\shfF)=\mu_{\max}(\shfF)=\mu(\shfF)$. Let $\shfF'$ be the kernel of $\shfF\rightarrow \shfG$. Then $\shfF'$ is non-zero by our assumption. We can compute that
$$\begin{aligned}
\mu(\shfG)&=\frac{\degH(\shfF)-\degH(\shfF')}{\rank(\shfG)}=\frac{\mu(\shfF)\rank(\shfG)+\mu(\shfF)\rank(\shfG)-\mu(\shfF')\rank(\shfF)}{\rank(\shfG)}\\
&=\mu(\shfF)+(\mu(\shfF)-\mu(\shfF'))\frac{\rank(\shfF')}{\rank(\shfG)}\geqslant \mu(\shfF),
\end{aligned}$$
which is a contradiction.
For general cases, let $$0=\shfF_0\subsetneq \shfF_1\subsetneq \shfF_2\subsetneq\cdots\subsetneq \shfF_l=\shfF$$ be the Harder-Narasimhan filtration of $\shfF$. Denote by $\shfF'_i(0\leqslant i\leqslant l)$ the image of $\shfF_i$ in $\shfG$. Then each $\shfF_i'/\shfF'_{i-1}$ is a quotient sheaf of $\shfF_i/\shfF_{i-1}$. The $\mu$-semistability of $\shfF_i/\shfF_{i-1}$ yields that $\mu(\shfF_i'/\shfF'_{i-1})\geqslant \mu(\shfF_i/\shfF_{i-1})\geqslant \mu_{\min}(\shfF)$. Therefore $$\mu(\shfG)=\frac{\sum\limits_{1\leqslant i\leqslant l}\degH(\shfF_i'/\shfF'_{i-1})}{\sum\limits_{1\leqslant i\leqslant l}\rank(\shfF'_i/\shfF'_{i-1})}\geqslant \mu_{\min}(\shfF).$$
\end{proof}

\begin{coro}\label{coro_ineq_min_slop}
Let $\shfF$ be a torsion-free coherent sheaf with $\mu_{\min}(\shfF)\geqslant 0$. For any subsheaf $\shfF'\subset \shfF$, it holds that $\degH(\shfF')\leqslant \degH(\shfF)$.
\end{coro}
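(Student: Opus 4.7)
The plan is to pass from the subsheaf $\shfF'$ to the quotient $\shfF/\shfF'$ and show that this quotient has non-negative degree, after which the conclusion follows from additivity of degree in the short exact sequence
\[
0 \longrightarrow \shfF' \longrightarrow \shfF \longrightarrow \shfF/\shfF' \longrightarrow 0.
\]
Since $c_1$ is additive in short exact sequences of coherent sheaves on the regular locus $U$ of $Y$, intersecting with $c_1(H_1)\cdots c_1(H_{e-1})$ gives
\[
\degH(\shfF) = \degH(\shfF') + \degH(\shfF/\shfF'),
\]
so it suffices to prove that $\degH(\shfF/\shfF') \geq 0$.

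To control $\degH(\shfF/\shfF')$, I would split into two cases according to the dimension of the support of $\shfF/\shfF'$. If $\mathrm{Supp}(\shfF/\shfF')$ has dimension less than $e$, then $\shfF/\shfF'$ is torsion and Remark \ref{rema_pos_tor} gives $\degH(\shfF/\shfF') \geq 0$ directly (as a non-negative sum of local lengths weighted by the non-negative degrees $\degH(\omega)$). Otherwise the support is of dimension $e$, so that $\mu(\shfF/\shfF')$ is a genuine finite rational number, and the preceding proposition applied to the torsion-free sheaf $\shfF$ and the quotient $\shfG = \shfF/\shfF'$ yields
\[
\mu(\shfF/\shfF') \geq \mu_{\min}(\shfF) \geq 0,
\]
whence $\degH(\shfF/\shfF') = \mu(\shfF/\shfF')\cdot \rank(\shfF/\shfF') \geq 0$.

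Combining the two cases with the additivity relation gives $\degH(\shfF') \leq \degH(\shfF)$, as desired. I do not expect a substantial obstacle: the only subtlety is remembering to separate the torsion quotient case from the non-torsion one, since the convention $\mu = +\infty$ for torsion sheaves makes the inequality $\mu(\shfF/\shfF') \geq \mu_{\min}(\shfF)$ vacuous there, and one has to invoke Remark \ref{rema_pos_tor} independently to get non-negativity of the degree. Everything else is formal from the additivity of the first Chern class on $U$ (whose complement in $Y$ has codimension at least $2$, so intersections with the nef classes $c_1(H_i)$ are unaffected by restriction to $U$).
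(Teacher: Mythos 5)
Your argument is correct. The paper states the corollary with no accompanying proof, so there is no "official" argument to compare against, but what you wrote is the natural and expected derivation from the preceding proposition. The key steps — additivity of $\degH$ across the short exact sequence $0\to\shfF'\to\shfF\to\shfF/\shfF'\to 0$ (using additivity of $c_1$ on the regular locus $U$, which suffices since $\mathrm{codim}(Y\setminus U)\geqslant 2$), followed by the case split on $\dim\mathrm{Supp}(\shfF/\shfF')$ — are exactly right. When the quotient is torsion you invoke Remark \ref{rema_pos_tor}; when it has generic rank $>0$ you invoke the characterization $\mu_{\min}(\shfF)=\inf\{\mu(\shfG): \shfG \text{ a quotient of }\shfF\}$ from the preceding proposition to get $\mu(\shfF/\shfF')\geqslant\mu_{\min}(\shfF)\geqslant 0$, hence $\degH(\shfF/\shfF')\geqslant 0$. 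You also correctly noted the subtlety that the infimum-over-quotients inequality is vacuous for torsion quotients (because $\mu=+\infty$ there), so the torsion case really does need Remark \ref{rema_pos_tor} as a separate ingredient. No gaps.
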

 
Note that all notions and results above can be generalized to the quotient category $\mathrm{Coh_{e,e-1}}(Y)=\mathrm{Coh}_e(Y)/\mathrm{Coh}_{e-2}(Y)$, where $\mathrm{Coh}_{e'}(Y)$ is the category whose objects are coherent sheaves of dimension$\leqslant e'$ \cite[1.6.2]{huy2010}. 
Then a coherent sheaf $\shfF$ with torsion part $T(\shfF)$ of codimension at least $2$ can be identified with $\shfF/T(\shfF)$.
Therefore we can even show that if $\shfF$ and $\shfG$ are torsion-free, then $\mu_{\min}(\shfF\otimes\shfG)=\mu_{\min}(\shfF)+\mu_{\min}(\shfG)$ due to the $\mu$-semistability of tensor products \cite[Theorem 3.1.4]{huy2010}.

\subsection{Asymptotic minimal slopes}
\begin{defi}
Let $\shfL$ be a line bundle on $\scrX$. We denote by $\mu^{\mathrm{asy}}_{\min}(\shfL)$ the limit inferior
$$\liminf_{n\rightarrow+\infty} \frac{\mu_{\min}(\pi_*\shfL^{\ot n})}{n}\in \R\cup \{\pm\infty\}$$
\end{defi}
\begin{prop}\label{prop_asymp_min_slop}
Let $\shfL$ be a relatively ample line bundle over $\scrX$. We have the following:
\begin{enumerate}
    \item[\textnormal{(a)}] The sequence $\displaystyle\Big\{\frac{\mu_{\min}(\pi_*\shfL^{\ot n})}{n}\Big\}$ converges in a number $\R\cup\{+\infty\}$
    \item[\textnormal{(b)}] If $\shfF$ is a coherent sheaf on $\scrX$ such that $\pi_*(\shfF\ot \shfL^{\ot n})$ is torsion free for every $n\in\N_+$, then 
    $$\liminf_{n\rightarrow +\infty}\frac{\mu_{\min}(\pi_*(\shfF\ot \shfL^{\ot n}))}{n}\geqslant \mu_{\min}^{\mathrm{asy}}(\shfL).$$
\end{enumerate}
\end{prop}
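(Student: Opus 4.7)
The plan is to prove both parts by a Fekete-type superadditivity argument, the key input being surjectivity of multiplication maps coming from relative ampleness of $\shfL$.

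For part (a), I first choose an integer $d_0 \geq 1$ such that $\shfL^{\otimes d_0}$ is relatively very ample, the graded $\O_Y$-algebra $\bigoplus_{n\geq 0}\pi_*\shfL^{\otimes d_0 n}$ is generated in degree one, and $R^i\pi_*\shfL^{\otimes d_0 n}=0$ for $i>0$, $n\geq 1$. Such a $d_0$ exists by the relative Serre vanishing theorem. This produces a surjection
$$\pi_*\shfL^{\otimes d_0 m} \otimes \pi_*\shfL^{\otimes d_0 n} \twoheadrightarrow \pi_*\shfL^{\otimes d_0(m+n)}$$
for all $m,n\geq 1$. Working in the quotient category $\mathrm{Coh}_{e,e-1}(Y)$ so that the sheaves are treated modulo codimension-two torsion, and combining the elementary inequality $\mu_{\min}(\text{image})\geq\mu_{\min}(\text{source})$ with the identity $\mu_{\min}(\shfF\otimes\shfG)=\mu_{\min}(\shfF)+\mu_{\min}(\shfG)$ recalled at the end of subsection~\ref{subsec_degree}, I obtain
$$\mu_{\min}(\pi_*\shfL^{\otimes d_0(m+n)}) \geq \mu_{\min}(\pi_*\shfL^{\otimes d_0 m}) + \mu_{\min}(\pi_*\shfL^{\otimes d_0 n}).$$
By Fekete's lemma applied to this superadditive sequence, the ratios $\mu_{\min}(\pi_*\shfL^{\otimes d_0 n})/(d_0 n)$ converge in $\R\cup\{+\infty\}$ to their supremum. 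To bridge from this arithmetic progression to the full sequence, for each residue $0\leq r<d_0$ I repeat the argument with surjections $\pi_*\shfL^{\otimes d_0 q}\otimes\pi_*\shfL^{\otimes r}\twoheadrightarrow\pi_*\shfL^{\otimes d_0 q+r}$, which hold for $q$ large; the bounded contribution $\mu_{\min}(\pi_*\shfL^{\otimes r})/(d_0 q+r)$ becomes negligible and every residue class converges to the same limit.

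For part (b), I fix $m$ large enough that $R^i\pi_*(\shfF\otimes\shfL^{\otimes m})=0$ for $i>0$ and that the multiplication map
$$\pi_*(\shfF\otimes\shfL^{\otimes m}) \otimes \pi_*\shfL^{\otimes n} \twoheadrightarrow \pi_*(\shfF\otimes\shfL^{\otimes(m+n)})$$
is surjective for all sufficiently large $n$; once more, relative ampleness of $\shfL$ supplies this. Since the hypothesis on $\shfF$ guarantees that every $\pi_*(\shfF\otimes\shfL^{\otimes\bullet})$ is torsion-free, the same quotient- and tensor-slope inequalities yield
$$\mu_{\min}(\pi_*(\shfF\otimes\shfL^{\otimes(m+n)})) \geq \mu_{\min}(\pi_*(\shfF\otimes\shfL^{\otimes m})) + \mu_{\min}(\pi_*\shfL^{\otimes n}).$$
Dividing by $m+n$ and letting $n\to\infty$, the first term on the right vanishes while the second tends to $\mu_{\min}^{\mathrm{asy}}(\shfL)$ by part (a), giving the desired lower bound on the liminf.

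The main obstacle is securing a single $d_0$ that simultaneously controls surjectivity of the multiplication maps, smallness of the torsion support of the relevant pushforwards, and vanishing of higher direct images, and then obtaining the analogous uniform statement for the $\shfF$-twisted pushforwards in part (b). All of this is standard for relatively ample $\shfL$, but the interpolation between residue classes must be done with care so that the limit does not depend on the residue. A secondary bookkeeping issue is handling the value $+\infty$ coherently throughout Fekete's lemma, which is accommodated by the convention $\mu(\mathrm{torsion})=+\infty$ and by the identification of sheaves in $\mathrm{Coh}_{e,e-1}(Y)$.
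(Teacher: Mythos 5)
Your overall strategy coincides with the paper's: establish superadditivity of $\mu_{\min}$ of pushforwards from the surjectivity of the multiplication maps $\pi_*(\shfF\ot\shfL^{\ot m})\ot\pi_*(\shfL^{\ot n})\twoheadrightarrow\pi_*(\shfF\ot\shfL^{\ot m+n})$ (valid for $m,n\gg 0$ by relative ampleness), using $\mu_{\min}(\text{image})\geq\mu_{\min}(\text{source})$ and $\mu_{\min}(\shfF\ot\shfG)=\mu_{\min}(\shfF)+\mu_{\min}(\shfG)$, then feed this into Fekete's lemma. The paper actually treats (a) and (b) in one stroke: it sets $a_k:=\mu_{\min}(\pi_*(\shfF\ot\shfL^{\ot k}))$, records the inequality $a_{m+n}\geq a_m+\mu_{\min}(\pi_*\shfL^{\ot n})$, applies Fekete directly to the full sequence $\{\mu_{\min}(\pi_*\shfL^{\ot n})\}$ (using the variant valid when superadditivity holds only for $m,n\geq N_0$) for (a), and then for (b) fixes $n$, writes $k=ln+r$, and iterates. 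Your part (b) (fix $m$ large, let $n\to\infty$) is a legitimate and slightly more streamlined variant.

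There is, however, a gap in your treatment of part (a). Introducing $d_0$ and working along the arithmetic progression $d_0\N$ forces you to bridge to the other residue classes, and your bridging only goes one way. The surjection $\pi_*\shfL^{\ot d_0 q}\ot\pi_*\shfL^{\ot r}\twoheadrightarrow\pi_*\shfL^{\ot d_0 q+r}$ yields
\[
\liminf_{q\to\infty}\frac{\mu_{\min}(\pi_*\shfL^{\ot d_0 q+r})}{d_0 q+r}\;\geq\; b:=\lim_{q\to\infty}\frac{\mu_{\min}(\pi_*\shfL^{\ot d_0 q})}{d_0 q},
\]
but nothing in what you wrote controls the corresponding $\limsup$, so the assertion ``every residue class converges to the same limit'' is not justified as stated. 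To close this, you need the opposite grouping, namely a surjection of the shape $\pi_*\shfL^{\ot d_0 q+r}\ot\pi_*\shfL^{\ot d_0-r}\twoheadrightarrow\pi_*\shfL^{\ot d_0(q+1)}$ for $q\gg 0$, which gives $\limsup_q\mu_{\min}(\pi_*\shfL^{\ot d_0 q+r})/(d_0 q+r)\leq b$ and hence convergence. Alternatively (and more cleanly), drop the $d_0$-progression altogether and apply the variant of Fekete's lemma directly to the full sequence using the superadditivity for $m,n\gg 0$, exactly as the paper does; this removes the residue-class interpolation entirely. A minor point for part (b): you should note that $\pi_*(\shfF\ot\shfL^{\ot m})\neq 0$ for $m\gg 0$ (guaranteed by relative ampleness when $\shfF\neq 0$), so that the fixed term $\mu_{\min}(\pi_*(\shfF\ot\shfL^{\ot m}))$ is a genuine real number and its contribution indeed vanishes after dividing by $m+n$.
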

\begin{proof}
Due to \cite[Example 2.1.28]{Positivity}, we have the following:
$$\pi_*(\shfF\ot \shfL^{\ot m})\otimes \pi_*(\shfL^{\ot n})\rightarrow \pi_*(\shfF\ot \shfL^{\ot m+n})\text{ is surjective for }m,n\gg 0$$
Assume that $\pi_*(\shfF\ot \shfL^{\ot n})$ are torsion-free. Then we can consider the sequence
$$\{a_k:=\mu_{\min}(\pi_*(\shfF\ot \shfL^{\ot k}))\}_{k\in \N_+}.$$
The surjectivity above gives that $$a_m+\mu_{\min}(\pi_*(\shfL^{\ot n}))\leqslant a_{m+n}.$$ Note that if $\shfF=\O_\scrX$, then the sequence $\{\displaystyle\frac{\mu_{\min}(\pi_*(\shfL^{\ot n}))}{n}\}$ converges to a number in $\R\cup \{+\infty\}$ due to Fekete's subadditive lemma, which proves (a).
We can easily see that for any fixed $n\in \N_+$, we write $k=ln+r(l\in\N_+,0\leqslant r<n)$, it holds that
$$\liminf_{k\rightarrow +\infty}\frac{a_k}{k}\geqslant \lim_{l\rightarrow +\infty} \frac{a_r+l\mu_{\min}(\pi_*(\shfL^{\ot n}))}{ln+r}=\frac{\mu_{\min}(\pi_*(\shfL^{\ot n}))}{n}.$$
Therefore $\displaystyle{\liminf\limits_{k\rightarrow+\infty} \frac{a_k}{k}\geqslant \sup\limits_n\frac{\mu_{\min}(\pi_*(\shfL^{\ot n}))}{n}=\mu_{\min}^{asy}(\shfL)}.$
\end{proof}

\subsection{Algebraic family of cycles}\label{sect_alg_cyc}
For any point $\omega\in Y^{(1)}$, let $\scrX_\omega$ denote the special fiber $\scrX\times_Y \mathrm{Spec}(\O_{Y,\omega}/\hd m_{Y,\omega})$ over $\omega$. For any irreducible component $W$ of $\scrX_\omega$, $W$ can be also viewed as a prime vertical divisor of $\scrX$. In the following, we do not distinguish $W$ and its corresponding divisor unless specified otherwise. Let $\lambda_W$ be the multiplicity of 
$\scrX_\omega$ at
$W$. We denote by $[\scrX_\omega]$ the cycle $\sum \lambda_W[W]$. 

Let $U$ be the regular locus of $Y$. Then $\overline{\{\omega\}}\cap U\rightarrow U$ is a regular embedding. Consider a commutative diagram
\[\begin{tikzcd}
\pi^{-1}(\overline{\{\omega\}}\cap U) \arrow{r}{} \arrow[swap]{d}{\pi_\omega} & \pi^{-1}(U) \arrow{d}{\pi} \\
\overline{\{\omega\}}\cap U \arrow{r}{i} & U
\end{tikzcd}
\]
As defined in \cite[\S 6]{fulton2014intersection}, for any $k\geqslant 0$, we can consider a refined Gysin homomorphism $$\begin{aligned}
i^!:A_k(\pi^{-1}(U))&\rightarrow A_{k-1}(\pi^{-1}(\overline{\{\omega\}}\cap U))\\
\alpha&\mapsto \alpha_\omega:=\alpha\cdot [\pi^{-1}(\overline{\{\omega\}}\cap U)].
\end{aligned}
$$
By \cite[Proposition 6.2 (a)]{fulton2014intersection}, it holds that \[\pi_{\omega*}(\alpha_\omega)=(\pi_*(\alpha))_\omega\in A_{k-1}(\overline{\{\omega\}}\cap U).\] Especially if $k=e$, since $\mathrm{codim}(Y\setminus U)\geqslant 2$, we can consider the homomorphism
$$A_e(\scrX)\rightarrow A_e(\pi^{-1}(U))\rightarrow A_{e-1}(\overline{\{\omega\}}\cap U)\simeq A_{e-1}(\ovl{\{\omega\}})$$
which coincides with the map $(\alpha\in A_{e}(\scrX))\mapsto\pi_*(\alpha\cdot[\scrX_\omega])$.
Suppose that $\pi_*(\alpha)=\lambda\cdot[Y]$. Then $$\pi_*(\alpha\cdot [\scrX_\omega])=\pi_{\omega*}((\alpha|_{\pi^{-1}(U)})_\omega)=(\pi_*(\alpha|_{\pi^{-1}(U)}))_\omega=\lambda\cdot[\omega]$$ holds for any $\omega\in Y^{(1)}$. We can view this as a generalization of \cite[Proposition 10.2]{fulton2014intersection}. In fact, $\lambda$ is the degree of the image of $\alpha$ in $A_0(\scrX_K)$.

As an application, here we give a calculation which will be useful in our follwoing content.
Let $A$ be a line bundle on $Y$, $\{\shfL_i\}_{0\leqslant i\leqslant d}$ be line bundles on $\scrX$. Let $L_i:=(\shfL_i)_K$ for each $i$. Then we can calculate that
\begingroup
\allowdisplaybreaks
\begin{align*}
&c_1(\shfL_0\ot\pi^*A)\cdots c_1(\shfL_d\ot\pi^* A)\cdot\pi^*\mathcal H-c_1(\shfL_0)\cdots c_1(\shfL_d)\cdot\pi^*\mathcal H\\
&=\sum_{0\leqslant i\leqslant d}\left(\prod\limits_{j\leqslant i} c_1(\shfL_j\ot \pi^*A)\prod\limits_{j>i} c_1(\shfL_j)-\prod\limits_{j< i} c_1(\shfL_j\ot \pi^*A)\prod\limits_{j\geqslant i} c_1(\shfL_j)\right)\cdot\pi^*\mathcal H\\
&=c_1(\pi^*A)\sum_{0\leqslant i\leqslant d} c_1(\shfL_0\ot\pi^*A)\cdots c_1(\shfL_{i-1}\ot\pi^*A)c_1(\shfL_{i+1})\cdots c_1(\shfL_{d+1})\cdot\pi^*\mathcal H\\
&=\degH \left(c_1(A)\pi_*\left(\sum_{0\leqslant i\leqslant d} \prod\limits_{j\leqslant i-1} c_1(\shfL_j\ot \pi^*A)\prod\limits_{j\geqslant i+1} c_1(\shfL_j)\right)\right)\\
&=\degH(A)\sum_{0\leqslant i\leqslant d}c_1(L_0)\cdots c_1(L_{i-1}) c_1(L_{i+1})\cdots c_1(L_{d}).
\end{align*}
\endgroup
In particular, if $\shfL_0=\cdots=\shfL_{d}=\shfL$, then \begin{equation}\label{calc_inter_alg_cyc}
    c_1(\shfL\ot \pi^*A)^{d+1}\cdot\pi^*\mathcal H-c_1(\shfL)^{d+1}\cdot\pi^*\mathcal H=\degH(A)c_1(\shfL_K)^d(d+1).
\end{equation}

\section{Adelic theory over function fields}\label{sec_adelic_fun}
\subsection{A remainder on adelic curves}\label{subsect_def_adlcv}
Let $K$ be a field and $M_K$ be the set of all its absolute values. Let $(\Omega, \mathcal A, \nu)$ be a measure space where $\mathcal A$ is a $\sigma$-algebra on $\Omega$ and $\nu$ is a measure on $(\Omega,\mathcal A)$. If there exists a $\phi: \Omega\rightarrow M_K, \omega\mapsto \lvert\cdot\rvert_\omega$ such that for any $a\in K\backslash\{0\}$, the function $$(\omega\in\Omega)\mapsto \ln|a|_\omega$$ is $\mathcal A$-measurable and $\nu$-integrable, then we call the data $(K,(\Omega, \mathcal A, \nu),\phi)$ an \textit{adelic curve}. For each $\omega\in\Omega$, we denote by $K_\omega$ the completion of $K$ with respect to the absolute value $\lvert\cdot\rvert_\omega$.
Moreover, we say $S$ is a \textit{proper adelic curve} if $S$ satisfies the product formula 
$$\int_{\omega\in\Omega} \ln|a|_\omega \nu(d\omega)=0.$$

\begin{rema}
In this paper, we only consider the case that an adelic curve is given by our polarised variety $Y$. 
Let $\mathcal A$ be the discrete $\sigma$-algebra on $Y^{(1)}$, and $\nu_{\mathcal H}$ be the measure on $Y^{(1)}$ such that $\nu_{\mathcal H}(\nu)=\degH(\nu)$. Then $S(Y,\mathcal H):=(K,(Y^{(1)},\mathcal A,\nu_{\mathcal H}),\phi)$ is a proper adelic curve where $\phi$ maps $\omega$ to $\lvert\cdot\rvert_\omega$.
\end{rema}

We now introduce the notion of adelic vector bundles. 
Let $E$ be a vector space over $K$ of dimension $r$. A \textit{norm family} $\xi$ is a set of norms $\{\nm_{\omega}\}_{\omega\in\Omega}$ parametrized by $\Omega$, where each $\nm_\omega$ is a norm on $E_{K_\omega}:=E\ot_K K_\omega$. We introduce some notations:
\begin{enumerate}
    \item[\textnormal{(a)}] Let $E^\vee$ be the dual space of $E$. We denote by $\nm_{\omega,*}$ the operator norm on $E^\vee_{K_\omega}:=E^\vee\ot_K K_{\omega}$, that is,
    $$\lVert s\rVert_{\omega,*}:=\inf_{x\in E\setminus\{0\}}\frac{\lvert s(x)\rvert_\omega}{\lVert x\rVert_\omega},$$
    where $s\in E_{K_\omega}^\vee$.
    Then $\xi^\vee:=\{\nm_{\omega,*}\}_{\omega\in\Omega}$ is a norm family on $E^\vee$. 
    \item[\textnormal{(b)}] Let $F$ be a subspace of $E$. For each $\omega\in \Omega$, let $\nm_{F,\omega}$ be the restriction of $\nm_{\omega}$ on $F_{K_\omega}$. The restricted norm family $\{\nm_{F,\omega}\}_{\omega\in\Omega}$ on $F$ is denoted by $\xi|_F$.
    \item[\textnormal{(c)}] Let $G=E/F$. Since $G_{K_\omega}:=G\ot_K{K_\omega}=E_{K_\omega}/F_{K_\omega}$, we define $\nm_{E\twoheadrightarrow G,\omega}$ the quotient norm of $\nm_\omega$, that is,
    $$\lVert s\rVert_{E\twoheadrightarrow G,\omega}:=\inf_{x\in f_{K_\omega}^{-1}(s)}\lVert x\rVert_\omega$$
    where $s\in G_{K_\omega}$ and $f_{K_\omega}:E_{K_\omega}\rightarrow G_{K_\omega}$. We denote by $\xi_{E\twoheadrightarrow G}$ the norm family $\{\nm_{E\twoheadrightarrow G,\omega}\}_{\omega\in\Omega}$ on G.
    \item[\textnormal{(d)}] Let $\det E$ be the determinant of $E$, which is $\wedge^r E$. For each $\omega\in\Omega$, we define $\nm_{\det,\omega}$ as follows:
    $$\lVert s\rVert_{\det,\omega}:=\inf_{\substack{s=t_1\wedge \cdots\wedge t_r\\ t_i\in E_{K_\omega}}} \lVert t_1\rVert_\omega\cdots\lVert t_r\rVert_\omega$$
    where $s\in \det E_{K_\omega}=(\det E)\ot_K K_{\omega}$. Therefore $\{\nm_{\det,\omega}\}_{\omega\in\Omega}$ is a norm family on $\det E$.
\end{enumerate}

\begin{defi}\label{def_adl_vec_bun} Let $\overline E=(E,\xi)$ be a pair of a finite-dimensional vector space over $K$ and a norm family on it. 
We say the norm family $\xi$ is \textit{measurable} if the function $$(\omega\in\Omega)\mapsto\lVert s\rVert_\omega$$ is $\mathcal A$-measurable with respect to $\nu$ for any $s\in E$.
We say the norm family $\xi$ is \textit{upper dominated} if
$$\forall s\in E\backslash\{0\}, \lefteqn{\int_\Omega \ln\lVert s \rVert_\omega}\lefteqn{\hspace{1.2ex}\rule[ 3.35ex]{1.1ex}{.05ex}}
\phantom{\int_\Omega \ln\lVert s \rVert_\omega}\nu(d\omega)<+\infty.$$
Moreover, if both $\xi$ and $\xi^\vee$ are upper dominated, we say $\xi$ is \textit{dominated}. $\overline E$ is said to be an \textit{adelic vector bundle} over $S$ if $\xi$ is dominated, and both $\xi$  and $\xi^\vee$ are measurable.
\end{defi}
Note that for any adelic vector bundle $(E,\xi)$, its dual $(E^\vee,\xi^\vee)$ is an adelic vector bundle due to \cite[4.1.30]{adelic}. Moreover, we have the following proposition on the preservation of being an adelic vector bundle.
\begin{prop}
We assume that $\mathcal A$ is discrete or $K$ admits a countable subfield dense in every $K_\omega$. Let $\ovl E=(E,\xi)$ be an adelic vector bundle.
\begin{itemize}
    \item[\textnormal{(1)}] Let $F$ be a subspace of $E$. Then $(F,\xi|_F)$ and $(E/F,\xi_{E\twoheadrightarrow E/F})$ are adelic vector bundles.
    \item[\textnormal{(2)}] The determinant $\mathrm{det}(\overline E):=(\det E,\det \xi)$ is an adelic vector bundle of dimension $1$.
\end{itemize}
\end{prop}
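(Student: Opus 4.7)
The plan is to verify, for each construction in (1) and (2), the four defining conditions of an adelic vector bundle in Definition \ref{def_adl_vec_bun}: measurability of the norm family, measurability of its dual, upper-dominance of the family, and upper-dominance of its dual. The role of the hypothesis on $\mathcal{A}$ or $K$ is to force the infima appearing in the quotient and determinant constructions to be attainable along a countable set, thereby preserving $\mathcal{A}$-measurability; the remaining estimates are routine comparisons between the new norm family and $\xi$ or $\xi^\vee$.

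For a subspace $F \subset E$, the restriction $\xi|_F$ inherits both measurability and upper-dominance tautologically from $\xi$, since $\lVert s\rVert_{F,\omega} = \lVert s\rVert_\omega$ for $s \in F$. The substantive point is the dual family $(\xi|_F)^\vee$ on $F^\vee$. I would identify it with the quotient of $\xi^\vee$ along the restriction $E^\vee_{K_\omega} \twoheadrightarrow F^\vee_{K_\omega}$ (which is surjective), so measurability reduces to the quotient case treated next, while upper-dominance follows from the bound $\lVert s^\vee\rVert_{(F,\omega),*} \leq \lVert \tilde s\rVert_{\omega,*}$ for any lift $\tilde s \in E^\vee$ of $s^\vee \in F^\vee$.

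For the quotient $G = E/F$, measurability of $\omega \mapsto \lVert s\rVert_{E\twoheadrightarrow G,\omega}$ is where the countability hypothesis is decisive: fix a $K$-lift $\tilde s \in E$ of $s$, so that $f_{K_\omega}^{-1}(s) = \tilde s + F_{K_\omega}$; when $K$ has a countable subfield $K_0$ dense in every $K_\omega$, the infimum defining $\lVert s\rVert_{E\twoheadrightarrow G,\omega}$ can be taken over $\tilde s + F_{K_0}$ by continuity of the norm, while when $\mathcal A$ is discrete there is nothing to check. A countable infimum of measurable functions is measurable. Upper-dominance is immediate from $\lVert s\rVert_{E\twoheadrightarrow G,\omega} \leq \lVert \tilde s\rVert_\omega$. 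The dual family $(\xi_{E\twoheadrightarrow G})^\vee$ identifies with the restriction of $\xi^\vee$ to $F^\perp \subset E^\vee$, reducing to the subspace case applied to the adelic vector bundle $(E^\vee,\xi^\vee)$.

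For the determinant, since $\dim_K \det E = 1$ it suffices to verify measurability and upper-dominance of $\omega \mapsto \lVert e_1 \wedge \cdots \wedge e_r\rVert_{\det,\omega}$ for one fixed $K$-basis $(e_i)$ of $E$. Writing the defining infimum over decompositions $t_1 \wedge \cdots \wedge t_r$ with $t_i \in E_{K_\omega}$, the countability hypothesis again lets us restrict to $t_i$ with coefficients in a countable dense subset of $E_{K_\omega}$, giving measurability; upper-dominance follows from the distinguished decomposition $(e_1,\dots,e_r)$ combined with upper-dominance of each $\lVert e_i\rVert_\omega$. The dual $(\det \xi)^\vee$ is handled by the parallel argument applied to $\xi^\vee$. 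The sole genuine obstacle in the proposition is the measurability of these infimum-type norms, for which the countability/discreteness assumption is exactly the hypothesis that replaces an uncountable infimum by a countable one.
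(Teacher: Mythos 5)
The paper's ``proof'' of this proposition is a one-line citation to \cite[Proposition 4.1.32]{adelic}, so there is no in-paper argument to compare against directly. Your outline captures the standard route taken in the cited source: the substantive obstacle is $\mathcal A$-measurability of the infimum-type norms (quotient, determinant, and the dual of a restriction), and the countability/discreteness hypothesis is there precisely so that each uncountable infimum can be replaced by a countable one. Your identifications $(\xi|_F)^\vee = (\xi^\vee)_{E^\vee\twoheadrightarrow F^\vee}$ and $(\xi_{E\twoheadrightarrow G})^\vee = \xi^\vee|_{F^\perp}$ are valid: the second is an exact elementary fact, and the first uses Hahn--Banach, which holds exactly at Archimedean places and in the approximate $(1+\varepsilon)$-form at non-Archimedean places over any complete valued field; since the quotient norm is an infimum, the approximate version still yields exact equality of norms, so the reduction works. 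The routine dominance bounds $\lVert s\rVert_{E\twoheadrightarrow G,\omega}\leqslant\lVert\tilde s\rVert_\omega$ and its dual analogue are correct, as is the observation that measurability of $\xi|_F$ and its upper-dominance are tautological.

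One point deserves more care than your sketch allows. In the determinant case the infimum defining $\lVert s\rVert_{\det,\omega}$ runs over the \emph{closed} algebraic set $\{(t_1,\dots,t_r)\in E_{K_\omega}^r : t_1\wedge\cdots\wedge t_r = s\}$, so one cannot directly replace the ambient space by a countable dense subset of $E_{K_\omega}^r$: density in the ambient does not pass to density in a closed constraint set. The fix is to note that $\det E$ is one-dimensional, so for $s\neq 0$ one can reparametrize the infimum over the \emph{open} set $\{(t_1,\dots,t_r): t_1\wedge\cdots\wedge t_r\neq 0\}$ by
$\lVert s\rVert_{\det,\omega}=\inf \prod_i\lVert t_i\rVert_\omega/\lvert\lambda(t_1,\dots,t_r)\rvert_\omega$, where $\lambda$ is defined by $t_1\wedge\cdots\wedge t_r=\lambda s$. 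Openness then makes the countable-dense reduction legitimate, and if the $t_i$ have coordinates (in a fixed $K$-basis of $E$) lying in the countable subfield $K_0$, then $\lambda\in K_0\subset K$ as well, so both $\omega\mapsto\lVert t_i\rVert_\omega$ and $\omega\mapsto\lvert\lambda\rvert_\omega$ are measurable by hypothesis. With that repair your sketch is a correct proof outline of the proposition, matching in spirit what the cited reference does.
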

\begin{proof}
See \cite[Proposition 4.1.32]{adelic}.
\end{proof}

\begin{defi}
Let $(E,\xi)$ be an adelic vector bundle over $S$. If $\dim_K E=1$, we define its \textit{Arakelov degree} as $$\widehat{\mathrm{deg}}(E,\xi)=-\int_\Omega \ln\lVert s\rVert_\omega \nu(d\omega){,}$$ where $s$ is a nonzero element of $E$. The definition is independent of $s$ because $S$ is proper.
If $(E,\xi)$ is an adelic vector bundle (not necessarily of dimension $1$) over $S$, we define the Arakelov degree of $(E,\xi)$ as
$$\ardeg(E,\xi):=\begin{cases}\ardeg(\mathrm{det}E,\mathrm{det}\xi)&\text{ if }E\not=0,\\\ 0&\text{ if }E=0.\end{cases}$$
The slope, the maximal slope and the minimal slope of $(E,\xi)$ are defined respectively as follows:
\[\begin{cases}
\widehat{\mu}(E,\xi)&:={\displaystyle \frac{\ardeg(E,\xi)}{\dim_K(E)},}\\[2ex]
\mumax(E,\xi)&:=\sup\limits_{0\not=F\subseteq G}\widehat\mu(F,\xi|_F),\\
\mumin(E,\xi)&:=\inf\limits_{E\twoheadrightarrow G\not=0}\widehat\mu(G,\xi_{E\twoheadrightarrow G}).
\end{cases}
\]
\end{defi}

\subsection{The adelic view of torsion-free coherent sheaves}\label{sec_adelic_view} 
Let $\mathscr{E}$ be a torsion-free coherent sheaf on $Y$ of rank $r$. Let $\eta$ be the generic point of $Y$.
For each $\omega\in Y^{(1)}$, we can also define the lattice norm $\nm_{\mathscr E,\omega}$ on $\mathscr E_\eta$ by
$$\lVert s\rVert_{\mathscr E,\omega}=\inf\{\lvert\lambda\rvert_\omega\mid\lambda\in K_\omega, \lambda^{-1}s\in \mathscr{E}\ot \widehat{\mathcal O}_{Y,\omega}\}$$

The pair $(\mathscr{E}_{\eta},\{\nm_{\mathscr{E},\omega}\}_{\omega\in Y^{(1)}})$ is an adelic vector bundle whose Arakelov degree is exactly $\degH(\mathscr{E})=c_1(H_1)\cdots c_1(H_{e-1})c_1(\mathscr{E})$. 
We may wonder whether the adelic definition of maximal and minimal slope coincides with geometric ones. Given a subspace $F$ of $\mathscr E_\eta$, the adelic subbundle $\overline F=(F,\{\nm_{\mathscr E,\omega}|_F\}_\omega)$ is uniquely determined by $F$. But when we consider a subsheaf $\shfF$ of $\mathscr E$, it can not be determined by its generic stalk $\shfF_\eta$. An example is the ideal sheaf $\O(-D)$ of some effective Cartier divisor $D$, which admits an injection $0\rightarrow \O(-D)\rightarrow \O_Y$. The quotient is a torsion sheaf $\O_D$. Therefore $\O(-D)$ can not be viewed as an adelic subbundle of $\O_Y$.
But we can prove the following theorem.
\begin{theo}
Let $\mathscr E$ be a torsion-free coherent sheaf on $Y$. Then for any quotient sheaf $\mathscr E\twoheadrightarrow \shfG$ with $\shfG$ torsion-free, the adelic vector bundle $\left(\shfG,\{\nm_{\shfG,\omega}\}\right)$ is uniquely determined by the generic stalk $\shfG_\eta$. 
\end{theo}

\begin{proof}
For any $\omega\in Y^{(1)}$, we have the commutative diagram: 
\[\begin{tikzcd}
\mathscr E_\eta\ot K_\omega \arrow{r}{} & \shfG_\eta\ot K_\omega \\
\mathscr E_\omega\ot \widehat\O_{Y,\omega} \arrow{r}{} \arrow{u}{} & \shfG_\omega\ot \widehat{\O}_{Y,\omega} \arrow{u}{}
\end{tikzcd}
\]
Both of the horizontal arrows are surjection.
For any $t\in \shfG_\eta\ot K_\omega$, let $$N(t):=\inf\{\lvert\lambda\rvert_{\omega}\mid \exists s\in \mathscr E_\eta\ot K_\omega,\text{ s.t. }\overline s=t,\lambda^{-1}s\in \mathscr E_\omega\ot \widehat \O_{Y,\omega}\}.$$  Note that $N(t)$ concides with the quotient norm of $\nm_{\mathscr E,\omega}$ induced by $\mathscr E_\eta\twoheadrightarrow \shfG_\eta$. Indeed, for any $\lambda \in K_\omega$ such that $\lambda^{-1}t\in \shfG_\omega\ot \widehat \O_{Y,\omega}$, there exists an element $s\in \mathscr E_\omega\ot \widehat \O_{Y,\omega}$, whose image $\overline s$ is  $\lambda^{-1}t$, hence $\overline {\lambda s}=t$ and $\lambda^{-1}(\lambda s)=s\in \mathscr{E}_\omega\ot\widehat \O_{Y,\omega}.$ Therefore $N(t)\leqslant \lVert t\rVert_{\shfG,\omega}$. The reverse inequality is obvious. 
\end{proof}

Note that for a torsion-free coherent sheaf $\mathscr E$, there is a bijection between the saturated subsheaves of $\mathscr E$ and vector subspaces of $\mathscr E_\eta$ \cite[Proposition 1.3.1]{moriwaki2008torsionfree}. Then the following is obvious.
\begin{coro}\label{coro_geo_adelic}
Let $\mathscr E$ be a torsion-free coherent sheaf. Let $\overline E$ be the associated adelic vector bundle $\left(\mathscr{E}_\eta,\{\nm_{\mathscr{E},\omega}\}_{\omega\in Y^{(1)}}\right)$. Then
$\mumin(\overline E)= \mu_{\min}(\mathscr E)$ and $\mumax(\overline E)= \mu_{\max}(\mathscr E).$
\end{coro}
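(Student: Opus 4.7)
The plan is to match the adelic and geometric optimizers by using the bijection between saturated subsheaves (respectively torsion-free quotients) of $\mathscr{E}$ and $K$-vector subspaces (respectively quotient spaces) of $\mathscr{E}_\eta$, together with the preceding theorem identifying the adelic quotient norm family with the intrinsic lattice norm family on a torsion-free quotient sheaf.

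For $\widehat{\mu}_{\min}$ the work is almost done already: the proposition from subsection \ref{subsec_degree} shows that $\mu_{\min}(\mathscr{E})$ equals the infimum of $\mu(\shfG)$ over torsion-free quotient sheaves $\mathscr{E}\twoheadrightarrow\shfG$. Every such $\shfG$ gives a quotient vector space $\mathscr{E}_\eta\twoheadrightarrow\shfG_\eta$, and conversely every quotient of $\mathscr{E}_\eta$ arises from a (unique up to the stated equivalence) torsion-free quotient sheaf, obtained by saturating the kernel. The preceding theorem identifies $\{\nm_{\shfG,\omega}\}_{\omega\in Y^{(1)}}$ with the quotient norm family $\xi_{\mathscr{E}_\eta\twoheadrightarrow\shfG_\eta}$, and since the Arakelov degree of the associated adelic vector bundle on $\shfG_\eta$ is $\degH(\shfG)$, one gets $\widehat{\mu}(\shfG_\eta,\xi_{\mathscr{E}_\eta\twoheadrightarrow\shfG_\eta})=\mu(\shfG)$. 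Taking infima yields $\widehat{\mu}_{\min}(\overline E)=\mu_{\min}(\mathscr{E})$.

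For $\widehat{\mu}_{\max}$ the geometric supremum may be restricted to saturated subsheaves: if $\shfF\subset\mathscr{E}$ is a subsheaf and $\shfF^{\mathrm{sat}}$ is its saturation, then $\mathrm{rk}(\shfF^{\mathrm{sat}})=\mathrm{rk}(\shfF)$ while $\shfF^{\mathrm{sat}}/\shfF$ is torsion, hence of non-negative degree by Remark \ref{rema_pos_tor}, so $\mu(\shfF^{\mathrm{sat}})\geqslant\mu(\shfF)$. The bijection between saturated subsheaves and subspaces of $\mathscr{E}_\eta$ (the cited result of Moriwaki) then matches the indexing sets on the two sides. What remains to check is that for a saturated $\shfF\subset\mathscr{E}$ the restricted norm family $\xi|_{\shfF_\eta}$ coincides with the lattice norm family $\{\nm_{\shfF,\omega}\}$. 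The inequality $\nm_{\mathscr{E},\omega}\leqslant\nm_{\shfF,\omega}$ on $\shfF_\eta$ is immediate from the defining infima; for the reverse, since $\mathscr{E}/\shfF$ is torsion-free and $\widehat{\O}_{Y,\omega}$ is flat over $\O_{Y,\omega}$, the exact sequence $0\to\shfF\otimes\widehat{\O}_{Y,\omega}\to\mathscr{E}\otimes\widehat{\O}_{Y,\omega}\to(\mathscr{E}/\shfF)\otimes\widehat{\O}_{Y,\omega}\to 0$ remains exact and the last term is torsion-free over $\widehat{\O}_{Y,\omega}$. Thus any $\lambda^{-1}s\in\mathscr{E}\otimes\widehat{\O}_{Y,\omega}$ whose image in $(\mathscr{E}/\shfF)\otimes K_\omega$ vanishes already lies in $\shfF\otimes\widehat{\O}_{Y,\omega}$, which yields the required equality. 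Combining gives $\widehat{\mu}_{\max}(\overline E)=\mu_{\max}(\mathscr{E})$.

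The main obstacle, and the only nontrivial input beyond formal bookkeeping, is the restricted-norm comparison in the previous paragraph: the preceding theorem is stated only for quotients, so the subobject analogue has to be supplied by hand via the torsion-freeness and flatness argument. Once that identification is in place, the rest is a direct translation between the geometric supremum/infimum and the adelic one under the bijection coming from generic fibers.
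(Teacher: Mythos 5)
Your proposal is correct and follows essentially the same route as the paper, which simply cites the quotient-norm theorem together with Moriwaki's bijection between saturated subsheaves and subspaces of $\mathscr E_\eta$ and then declares the corollary ``obvious.'' The one genuinely non-formal step you supply --- the identification of the restricted norm family $\xi|_{\shfF_\eta}$ with the lattice norm family $\{\nm_{\shfF,\omega}\}$ for a saturated subsheaf $\shfF$, via flatness of $\widehat\O_{Y,\omega}$ and torsion-freeness of $\mathscr E/\shfF$ --- is precisely the subsheaf-side analogue of the paper's quotient-norm theorem that the word ``obvious'' glosses over, and your argument for it is correct.
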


\subsection{Adelic line bundles}
Let $X$ be a projective $K$-scheme. We denote by $X^{\mathrm{an}}_\omega$ the analytification of $X_{K_\omega}:=X\times_{\mathrm{Spec}K} \mathrm{Spec}K_\omega$ with respect to $\lvert\cdot\rvert_\omega$ in the sense of Berkovich spaces \cite{Berkovich}. As a set, $X^{\mathrm{an}}_\omega$ consists of pairs $x=(p\in X_{K_\omega}, \lvert\cdot\rvert_x)$ where $\lvert\cdot\rvert_x$ is an absolute value on the residue field $\kappa(p)$ whose restriction on $K_\omega$ is $\lvert\cdot\rvert_\omega$. We denote by $\widehat\kappa(x)$ the completion of $\kappa(p)$ with respect to $\lvert\cdot\rvert_x$. 

Let $L$ be a line bundle over $X$. We denote by $L^{\mathrm{an}}_\omega$ the anlytification of $L_\omega:=L\ot \O_{X_{K_\omega}}$ on $X^{\mathrm{an}}_\omega$. A metric $\varphi_\omega$ of $L^{\mathrm{an}}_\omega$ is a collection $\{\lvert\cdot\rvert_\varphi(x)\}_{x\in X^{\mathrm{an}}_\omega}$ where each 
$\lvert\cdot\rvert_\varphi(x)$ is a norm on $L\ot \widehat\kappa(x)$. We say $\varphi$ is continuous if for any regular section $s$ of $L_\omega$ on an open subset $U\subset X_{K_\omega}$, the function
$$(x\in U^{\mathrm{an}})\mapsto \lvert s\rvert_\varphi(x)$$ is continuous.

An \textit{adelic line bundle} is a pair $(L,\{\varphi_\omega\}_{\omega\in\Omega})$ where each $\varphi_\omega$ is a continuous metric on $L^{\mathrm{an}}_\omega$. In \cite[\S 6]{adelic}, Chen and Moriwaki put some conditions of measurability and integrability on the family $\{\varphi_\omega\}_{\omega\in\Omega}$ of continuous metrics. In this paper, the adelic line bundles that we will consider satisfy their conditions.

\subsection{Metrics given by a model}
In this subsection, we fix a projective $K$-variety $X$ and a line bundle $L$ over $X$.
For any $\omega\in Y^{(1)}$, we denote by $K^\circ_{\omega}$ the valuation ring of $K_\omega$. Let $(\pi:\scrX\rightarrow Y,\shfL)$ be a $Y$-model of $(X,L)$. 
Since $Y$ is a projective variety, we have a morphism $\mathrm{Spec}(K_{\omega}^\circ) \rightarrow Y$. Consider the base change $\scrX_{K_{\omega}^{\circ}}\rightarrow \mathrm{Spec}(K_{\omega}^\circ)$ of $\pi$, which is flat due to the surjectivity \cite[Proposition 4.3.9]{Liu}. Let $\hdX_{\omega}$ be the completion of $\scrX_{K_{\omega}^\circ}$ with respect to the special fiber. Then $\hdX_\omega$ is an admissible formal scheme over $\mathrm{Spf}(K_\omega^\circ)$ due to the flatness \cite[\S 1, p.297]{BoschLutkebohmert}. Therefore we can consider its generic fiber $\hdX_{\omega,\mathrm{rig}}$ \cite[\S 4]{BoschLutkebohmert}. On the other hand, we can obtain a rigid space $X_{\omega}^{\mathrm{rig}}$ which is the rigid analytification of $X_\omega$. We have a natural morphism of rigid spaces, 
$$\hdX_{\omega,\mathrm{rig}}\rightarrow X_{\omega}^{\mathrm{rig}}$$
which is an isomorphism since $\scrX_{K_{\omega}^{\circ}}\rightarrow \mathrm{Spec}(K_{\omega}^\circ)$ is proper \cite[Theorem 5.3.1]{conrad1999irreducible}.
Let $\hd L_{\omega}$ be the completion of $L$. Let $L_\omega^\rig$ denote analytification of $L_\omega$ on $X^\rig_\omega$, $\hd L_{\omega,\rig}$ denote the generic fiber of $\hd L_\omega$. As defined in \cite[Lemma 7.4]{gubler1998local}, we can assign a formal metric $\lvert\cdot\rvert_{\shfL,\omega}$ on $L^\rig_{\omega}\simeq \hd L_{\omega,\rig}$, which can be extended to $L^{\mathrm{an}}_{\omega}$ on the Berkovich space $X^{\mathrm{an}}_\omega$ since the image of $X^\rig_\omega\rightarrow X^{\mathrm{an}}_\omega$ is dense. Such a formal metric is continuous. The following proposition shows that the action of pull-back on the formal metric only depends on the generic fiber.
\begin{prop}\label{prop_metric_puck}
Let $\rho:\scrX'\rightarrow \scrX$ be a proper morphism of projective $k$-varieties such that $\pi'=\pi\circ \rho:\scrX'\rightarrow Y$ is surjective. Let $\rho_K:\scrX'_K\rightarrow \scrX_K$ be the induced morphism on the generic fibers. 
Let $\shfL$ be a line bundle on $\scrX$. Then for each $\omega\in Y^{(1)}$,
$$\rho^*_K\lvert\cdot\rvert_{\shfL,\omega}=\lvert\cdot\rvert_{\rho^*\shfL,\omega}$$ as metrics on $\rho_K^*\shfL_K.$
\end{prop}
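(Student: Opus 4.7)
The plan is to verify functoriality step by step along the construction of the model metric: base change to $K_\omega^\circ$, $\mathfrak m$-adic completion along the special fiber, passage to the generic fiber of the resulting admissible formal scheme, and finally extension from the rigid space to the Berkovich space. Since both sides of the claimed equality are continuous metrics on $\rho_K^*\shfL_K$ viewed on $(\scrX'_K)^{\mathrm{an}}_\omega$, and since the image of the rigid space $(\scrX'_K)^{\rig}_\omega$ in $(\scrX'_K)^{\mathrm{an}}_\omega$ is dense, it suffices to check the equality of the corresponding formal metrics on the rigid side.

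First I would observe that because $\rho$ is proper and $\pi\circ\rho$ is surjective, after the flat base change $\mathrm{Spec}(K_\omega^\circ)\to Y$, the morphism $\rho_{K_\omega^\circ}:\scrX'_{K_\omega^\circ}\to \scrX_{K_\omega^\circ}$ is again proper and compatible with the pullback of line bundles, i.e.\ the base change of $\rho^*\shfL$ is $\rho^*_{K_\omega^\circ}(\shfL_{K_\omega^\circ})$. Next, completion along the special fiber is functorial for morphisms of $K_\omega^\circ$-schemes of finite type, so it produces a morphism $\widehat\rho_\omega:\hdX'_\omega\to \hdX_\omega$ of admissible formal $\mathrm{Spf}(K_\omega^\circ)$-schemes, and $\widehat{\rho^*\shfL}_\omega=\widehat\rho_\omega^*\,\hd\shfL_\omega$. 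Taking generic fibers (Raynaud's functor) and invoking the identification $\hdX_{\omega,\rig}\simeq X_\omega^{\rig}$ from GAGA for proper $K_\omega^\circ$-schemes used in the paragraph preceding the proposition, one sees that $\widehat\rho_\omega$ has generic fiber isomorphic to $\rho_K^{\rig}:(\scrX'_K)^{\rig}_\omega\to (\scrX_K)^{\rig}_\omega$, and $(\widehat\rho_\omega^*\hd\shfL_\omega)_{\rig}=(\rho_K^{\rig})^*(\hd\shfL_{\omega,\rig})$.

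Now I would check the equality of formal metrics directly. Recall that $\lvert\cdot\rvert_{\shfL,\omega}$ is the formal metric characterized locally by the condition $\lvert s\rvert_{\shfL,\omega}\equiv 1$ whenever $s$ is a formal local frame of $\hd\shfL_\omega$ on some formal affine open $\mathfrak U\subset\hdX_\omega$, evaluated on $\mathfrak U_{\rig}$. Given such a frame $s$ over $\mathfrak U$, the section $\widehat\rho_\omega^* s$ is a formal local frame of $\widehat\rho_\omega^*\hd\shfL_\omega=\widehat{\rho^*\shfL}_\omega$ on $\widehat\rho_\omega^{-1}(\mathfrak U)$, so by the same characterization its $\lvert\cdot\rvert_{\rho^*\shfL,\omega}$-norm is identically $1$ on the generic fiber of $\widehat\rho_\omega^{-1}(\mathfrak U)$. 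On the other hand, $\rho_K^{\rig,*}\lvert s\rvert_{\shfL,\omega}$ also equals $1$ on the same locus by definition of pullback of metrics. As open formal affines covering $\hdX_\omega$ on which $\hd\shfL_\omega$ is trivial form an admissible cover, this yields $\rho_K^{\rig,*}\lvert\cdot\rvert_{\shfL,\omega}=\lvert\cdot\rvert_{\rho^*\shfL,\omega}$ on the whole rigid space.

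Finally I would transport this identity from the rigid space $(\scrX'_K)^{\rig}_\omega$ to the Berkovich space $(\scrX'_K)^{\mathrm{an}}_\omega$: both sides are continuous metrics on $\rho_K^*\shfL_K$ and agree on the dense subset $(\scrX'_K)^{\rig}_\omega\subset (\scrX'_K)^{\mathrm{an}}_\omega$, so they coincide. The main obstacle here is really bookkeeping: verifying that each of the four functorial operations (base change, completion, generic fiber, analytification) commutes with the pullback along $\rho$ on both schemes and line bundles, and that Raynaud's generic fiber of the completion is canonically identified with the rigid analytification of the generic fiber, which is where properness of $\pi$ (and hence of $\rho$ after base change) enters through the GAGA-type isomorphism $\hdX_{\omega,\rig}\simeq X_\omega^{\rig}$ cited from Conrad.
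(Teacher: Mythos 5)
Your argument is correct and follows essentially the same route as the paper's, which observes that the formal metric $\lvert\cdot\rvert_{\shfL,\omega}$ is determined by the local data over $\mathrm{Spec}\,\O_{Y,\omega}$ and then defers to Faber's Lemma 2.4 for the functoriality of the construction. You simply carry out in full the step-by-step functoriality check (base change to $K_\omega^\circ$, completion along the special fiber, Raynaud generic fiber under the GAGA identification, local frames for the formal metric, and density of the rigid points in the Berkovich space) that the cited lemma encapsulates.
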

\begin{proof}
Since for each $\omega\in Y^{(1)}$, the model metric $\lvert\cdot\rvert_{\shfL,\omega}$ is determined by $\scrX_{\O_{Y,\omega}}\rightarrow \mathrm{Spec}\O_{Y,\omega}$ and $\shfL\ot \O_{Y,\omega}$, the proof is almost the same with the case that $Y$ is a regular projective curve. We refer the reader to \cite[Lemma 2.4]{Faber_2009}.
\end{proof}

\subsection{Pushforwards of adelic line bundles}\label{subsec_push_adelic_line}
Let $X$ be a projective $K$-variety and $L$ be a line bundle over $X$. 
Let $(\pi:\scrX\rightarrow Y,\shfL)$ be a model of $(X,L)$, which induces an adelic line bundle $\ovl L=(L,\{\lVert\cdot\rVert_{\shfL,\omega}\})$ on $X$.
For any $s\in H^0(X,L)$, we define the supremum norm as $$\lVert s\rVert_{\shfL,\omega}:=\sup\limits_{x\in X_{\omega}^{\mathrm{an}}}\lvert s\rvert_{\shfL,\omega}(x).$$
On the other hand, since $H^0(X,L)\simeq (\pi_*\shfL)_{\eta}$, we can consider the lattice norm $\lVert\cdot\rVert_{\pi_*\shfL,\omega}$ as well. 
The following comparison between the lattice norm and supremum norm is given in \cite[Lemma 6.3 and Theorem 6.4]{boucksom2021spaces}.
\begin{theo}\label{theo_comparison_lattice_sup}
There exists a constant $C_\omega$ depends only on $\scrX_{K^\circ_\omega}$ such that 
$$\nm_{\shfL,\omega}\leqslant \nm_{\pi_*\shfL,\omega}\leqslant C_\omega\nm_{\shfL,\omega}$$
Moreover, $C_\omega=1$ if the special fiber of $\scrX_{K^\circ_\omega}$ is reduced.
\end{theo}

Note that the pair $(H^0(X,L),\{\nm_{\shfL,\omega}\})$ is an adelic vector bundle due to \cite[Theorem 6.2.18]{adelic}. We will see that the comparison theorem leads to an application on degrees and minimal slopes.
\begin{prop}\label{prop_min_slop_inv} For each $n\in\N_+$, let $\ovl E_n$ be the adelic vector bundle $(H^0(X,L^{\ot n}),\{\nm_{\shfL^{\ot n},\omega}\}_{\omega\in Y^{(1)}})$. There exists a constant $C_\scrX$ depends on $\scrX$ only such that 
\[\begin{cases}
    \ardeg(\ovl E_n)\geqslant \degH(\pi_*(\shfL^{\ot n}))
    \geqslant \ardeg(\ovl E_n)-C_\scrX h^0(X,L),\\
    \mumin(\ovl E_n)\geqslant \mumin(\pi_*(\shfL^{\ot n}))\geqslant \mumin(\ovl E_n)-C_{\scrX}.
\end{cases}\]
In particular, $$\displaystyle
\mu_{\min}^{\mathrm{asy}}(\shfL)=\liminf_{n\rightarrow +\infty}\frac{\mumin(\ovl E_n)}{n}.$$
\end{prop}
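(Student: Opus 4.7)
The proof hinges on Theorem \ref{theo_comparison_lattice_sup}: at every $\omega \in Y^{(1)}$ the sup norm $\nm_{\shfL^{\ot n},\omega}$ and the lattice norm $\nm_{\pi_*\shfL^{\ot n},\omega}$ on $H^0(X,L^{\ot n}) \simeq (\pi_*\shfL^{\ot n})_\eta$ are sandwiched by a constant $C_\omega \geq 1$ that equals $1$ as soon as the special fibre of $\scrX_{K_\omega^\circ}$ is reduced. Because $\scrX \to Y$ is projective and $k$ is of characteristic zero, the non-reduced fibre locus is a proper closed subset of $Y$ with only finitely many codimension-one components, so $C_\scrX := \sum_{\omega}(\log C_\omega)\,\degH(\omega)$ is a finite constant depending only on $\scrX$.

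For the Arakelov-degree inequality I would pass to the determinant line. For a decomposable wedge $s = s_1 \wedge \cdots \wedge s_r \in \det H^0(X,L^{\ot n})$ with $r = h^0(X,L^{\ot n})$, the subadditivity of the determinant norm inflates $C_\omega$ into $C_\omega^r$, and integrating $-\log$ against $\nu_{\mathcal H}$ produces
\[
\ardeg(\ovl E_n) \;\geq\; \ardeg\bigl(H^0(X,L^{\ot n}),\{\nm_{\pi_*\shfL^{\ot n},\omega}\}\bigr) \;\geq\; \ardeg(\ovl E_n) - C_\scrX\, r.
\]
Corollary \ref{coro_geo_adelic} identifies the middle term with $\degH$ of the torsion-free quotient of $\pi_*\shfL^{\ot n}$; the torsion contributes non-negatively to $\degH$ by Remark \ref{rema_pos_tor}, which preserves the claimed direction of the two-sided bound on $\degH(\pi_*\shfL^{\ot n})$. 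The same sandwich transports to the quotient norm on any nonzero quotient $G$ of $\ovl E_n$, giving
\[
\widehat\mu(G,\mathrm{sup}) \;\geq\; \widehat\mu(G,\mathrm{lat}) \;\geq\; \widehat\mu(G,\mathrm{sup}) - C_\scrX
\]
with $C_\scrX$ uniform in $G$ and in $n$ (the factor $\dim_K G$ in the degree bound cancels upon dividing by $\dim_K G$ to form a slope). Taking infimum over torsion-free quotients and invoking Corollary \ref{coro_geo_adelic} once more on the lattice side yields the minimal-slope inequalities.

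The asymptotic identity then follows by dividing the slope bound by $n$: the perturbation $C_\scrX/n$ vanishes as $n \to +\infty$, so the two $\liminf$'s agree, and this common value is $\mu_{\min}^{\mathrm{asy}}(\shfL)$ by definition. The main technical nuisance I anticipate is the torsion bookkeeping, namely checking that replacing $\pi_*\shfL^{\ot n}$ by its torsion-free quotient $\shfF$ preserves the pointwise norm sandwich (the lattice norms on the generic stalk are insensitive to modifications in codimension $\geq 2$ and the surjection onto $\shfF$ can only decrease the lattice norm), so that Corollary \ref{coro_geo_adelic} is cleanly applicable on the geometric side without losing the control provided by $C_\scrX$.
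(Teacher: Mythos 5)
Your proof follows exactly the paper's route: invoke Theorem~\ref{theo_comparison_lattice_sup} place by place, use generic reducedness of the fibres to ensure $C_\omega=1$ for all but finitely many $\omega$, set $C_\scrX=\sum_\omega \degH(\omega)\ln C_\omega$, and transfer the pointwise sandwich to determinants (for degrees) and quotient norms (for minimal slopes). This matches the paper's argument, which is exactly this and no more.

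One remark on the ``torsion bookkeeping'' you flag as a technical nuisance: it is not actually needed, and the way you propose to handle it would not quite work if it were. Since $\scrX$ is integral and $\pi$ surjective, multiplication by any $\pi^*f$ ($f\in\O_Y$ nonzero) is injective on $\shfL^{\ot n}$, so $\pi_*\shfL^{\ot n}$ is automatically torsion-free; hence Corollary~\ref{coro_geo_adelic} and the degree identity $\ardeg(\text{lattice})=\degH(\pi_*\shfL^{\ot n})$ apply directly without passing to a torsion-free quotient. If $\pi_*\shfL^{\ot n}$ did have torsion, your appeal to Remark~\ref{rema_pos_tor} would only give $\degH(\text{TF quotient})\leqslant \degH(\pi_*\shfL^{\ot n})$, which protects the \emph{lower} bound but breaks the \emph{upper} bound $\ardeg(\ovl E_n)\geqslant\degH(\pi_*\shfL^{\ot n})$. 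So the correct fix is to observe torsion-freeness, not to absorb torsion via positivity. Finally, you are right to use $r_n=h^0(X,L^{\ot n})$ in the determinant step; the statement's $h^0(X,L)$ appears to be a misprint for $h^0(X,L^{\ot n})$.
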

\begin{proof}
Since the generic fiber $X$ is reduced, there exists an open subset $U\subset Y$ such that for any $\omega\in Y^{(1)}\cap U$, the special fiber of $\scrX_{K^\circ_\omega}$ is reduced. By Theorem \ref{theo_comparison_lattice_sup},
there exists a constant $C_\omega$ depends only on $\scrX_{K^\circ_\omega}$ such that 
$$\nm_{\shfL^{\ot n},\omega}\leqslant \nm_{\pi_*(\shfL^{\ot n}),\omega}\leqslant C_\omega\nm_{\shfL^{\ot n},\omega},$$ and $C_\omega=1$ for all but finitely many $\omega$. We are done by setting $C_\scrX=\sum_{\omega\in Y^{(1)}}\degH(\omega)\ln C_\omega$.
\end{proof}

\section{Arakelov intersection theory and height functions}\label{sec_arak_int}
In this section, we fix a projective $K$-variety $X$.
\subsection{Semipositive adelic line bundles}\label{subsect_adelic_line_bund}
Recall that an adelic line bundle $\overline L=(L,\{\phi_\omega\}_{\omega\in Y^{(1)}})$ consist of a line bundle $L$ over $X$ and a family of continuous metrics satisfies certain dominancy and measurablity properties. Consider two adelic line bundles $\overline L=(L,\{\phi_\omega\})$ and $\overline L'=(L,\{\phi_\omega'\})$ whose underlying line bundles coincide. For each $\omega\in Y^{(1)}$, note that the local distance $d(\phi_\omega,\phi_\omega')$ is set to be the maximum of a continuous function on $X^{\mathrm{an}}_{\omega}$ locally given by $x\mapsto \left\lvert \ln\frac{\lvert s\rvert_{\phi_\omega}(x)}{\lvert s\rvert_{\phi_\omega'}(x)}\right\rvert$ with $s$ is a rational section of $L_\omega$ not vanishing at $x$. This is independent of the choice of $s$.

In the case over function fields, the global distance can be given by a summation
$$d(\overline L,\overline L'):=\sum\limits_{\omega\in Y^{(1)}}\degH(\omega)d(\phi_\omega,\phi_\omega').$$
Now assume that there exists a sequence of triples $\{(\pi_n:\scrX_n\rightarrow  Y,\shfL_n,l_n)\}_{n\in\N_+}$ where each $(\scrX_n,\shfL_n)$ is a $Y$-model of $(X,L^{\ot l_n})$ for some positive integer $l_n$. Let $\overline L_{n}:=(L,\{\lvert\cdot\rvert_{\shfL_n,\omega}^{1/l_n}\})$.
We say an adelic line bundle $\overline L$ semipositive if
\begin{itemize}
    \item[\textnormal(i)] $\shfL_n$ are relatively nef,
    \item[\textnormal(ii)] $\lim\limits_{n\rightarrow \infty}d(\overline L, \overline L_n)=0$.
\end{itemize}
We say $\overline L$ is integrable if $\overline L=\overline L_1\otimes \overline L_2^{\vee}$ for semipositive line bundles $\overline L_1$ and $\overline L_2$.

\subsection{Arithmetic Intersection theory}\label{subsect_arith_inter} Here we give a reminder on the intersection theory described in \cite{Chambert_Loir_2006} and \cite{Faber_2009} with some modifications due to our setting. We have to mention that such arithmetic intersection theory has been discussed in \cite{chen2021arithmetic,moriwaki2016adelic} and \cite{Gubler_2003_5_2_4_711_0}. Their results show that the arithmetic intersection number is an integral (or a summation) of local intersection number. But here we consider the arithmetic intersection number as a limit of geometric intersection numbers, so that we can show the relationship between Arakelov geometry over function fields and relative geometry. 

We first need the following lemma about common modeling.
\begin{lemm}\label{lemm_comm_model} Let $L_1,\dots, L_n$
 be line bundles on $X$. Let \[(\mathscr{X}_1,\shfL_1),\dots,(\scrX_n,\shfL_n)\] be models of $(X,L_1),\dots,(X,L_n)$ respectively. Then there exists a $Y$-model $\scrX$ of $X$ and $Y$-morphisms $p_i:\scrX\rightarrow \scrX_i$ such that 
\begin{itemize}
    \item[\textnormal{(i)}] $\shfL_i':=p_i^*\shfL_i$ is a model of $L_i$.
    \item[\textnormal{(ii)}] For each place $\omega\in Y^{(1)}$, the induced formal metrics $\lvert\cdot\rvert_{\shfL_{i,\omega}}=\lvert\cdot\rvert_{\shfL'_{i,\omega}}$.
\end{itemize}
\end{lemm}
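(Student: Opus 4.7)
The plan is to construct $\scrX$ as the closure of the diagonal inside the $Y$-fiber product of the $\scrX_i$, a standard common model trick, and then to read off (ii) from the functoriality statement already established in Proposition \ref{prop_metric_puck}. Concretely, I would first form $\mathscr{Z} := \scrX_1 \times_Y \scrX_2 \times_Y \cdots \times_Y \scrX_n$ with its natural projections $\mathrm{pr}_i : \mathscr{Z} \to \scrX_i$. Projectivity is preserved under $Y$-fiber products, so $\mathscr{Z} \to Y$ is projective. Its generic fiber is $X \times_K X \times_K \cdots \times_K X$, and the diagonal $\Delta : X \hookrightarrow \mathscr{Z}_K$ realizes $X$ as a closed subvariety. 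I then take $\scrX$ to be the scheme-theoretic closure of $\Delta(X)$ inside $\mathscr{Z}$; this is an integral closed subscheme of $\mathscr{Z}$, hence projective over $Y$, with generic fiber canonically identified with $X$.

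Next I would set $p_i := \mathrm{pr}_i|_{\scrX}$. Under the identification $\scrX_K = X$ furnished by $\Delta$, the morphism $(p_i)_K : X \to X$ is the identity, since $\mathrm{pr}_i \circ \Delta = \mathrm{id}_X$. Setting $\shfL_i' := p_i^* \shfL_i$, one then has $(\shfL_i')_K = (p_i)_K^* L_i = L_i$, so $(\scrX, \shfL_i')$ is a $Y$-model of $(X, L_i)$, proving (i). Surjectivity of $\scrX \to Y$ is automatic from the nonemptiness of the generic fiber.

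For (ii), I would invoke Proposition \ref{prop_metric_puck} applied to the proper morphism $p_i : \scrX \to \scrX_i$ over $Y$ and the line bundle $\shfL_i$. The proposition yields $(p_i)_K^* \lvert \cdot \rvert_{\shfL_i, \omega} = \lvert \cdot \rvert_{p_i^* \shfL_i, \omega} = \lvert \cdot \rvert_{\shfL_i', \omega}$ for every $\omega \in Y^{(1)}$. Since $(p_i)_K$ is the identity on $X$, the left-hand side is simply $\lvert \cdot \rvert_{\shfL_i, \omega}$, which is exactly (ii). The only subtlety that I expect to require care is selecting the correct integral component of $\mathscr{Z}$: the ambient fiber product may a priori be reducible or non-reduced, but the closure of $\Delta(X)$ isolates the unique component whose generic fiber is $X$ and which carries compatible projections to all of the $\scrX_i$. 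Beyond that, the argument is purely formal and rests entirely on the functoriality of model metrics.
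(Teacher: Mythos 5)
Your proof is correct and matches the standard construction that the cited reference (Faber, Lemma 2.2) carries out: take the scheme-theoretic closure of the diagonal in the $Y$-fiber product $\scrX_1\times_Y\cdots\times_Y\scrX_n$, restrict the projections, and deduce the metric compatibility (ii) from the pull-back functoriality of model metrics, i.e.\ Proposition \ref{prop_metric_puck}. The paper itself gives no argument (it simply cites Faber), so your write-out supplies precisely the intended content, with all the relevant checks (projectivity and integrality of the closure, surjectivity over $Y$, $(p_i)_K=\mathrm{id}_X$) in place.
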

\begin{proof} See \cite[Lemma 2.2]{Faber_2009}.
\end{proof}

For $i\in \{0,\dots, d\}$, let $\overline L_i$ be a semipositive line bundles defined by the sequence $\{(\scrX_{i,n},\shfL_{i,n},l_{i,n})\}_{n\in\N_+}$.
For each $n$, we may assume that $\scrX_{0,n}=\cdots=\scrX_{d,n}=\scrX_n$ by Lemma \ref{lemm_comm_model}. We denote by $\pi_n$ the morphism $\scrX_n\rightarrow Y$. Then we define the intersection number by
$$\widehat c_1(\overline L_0)\cdots \widehat c_1(\overline L_d):=\limnto \frac{c_1(\shfL_{0,n})\cdots c_1(\shfL_{d,n})\cdot\pi_n^*{\mathcal{H}}}{l_{0,n}\cdots l_{d,n}}.$$
We are going to show that this intersection number is well-defined, which is ensured by the following estimate.
\begin{prop}\label{prop_compar_int}
Let $\pi:\scrX\rightarrow Y$ be a model of $X$. Let $\shfL_1,\dots,\shfL_d$ be line bundles on $\scrX$. Let $L_i:=\shfL_i|_K$ for $i=1,\dots,d$. Let $\shfL$ and $\shfL'$ be line bundles on $\scrX$, such that $\shfL_K=\shfL'_K=L$ for some line bundle $L$ on $X$. Then 
$$\begin{aligned}
\lvert c_1(\shfL^\vee\otimes \shfL')&c_1(\shfL_1)\cdots c_1(\shfL_d)\cdot\pi^*{\mathcal H}\rvert\leqslant\\
&\deg_{L_1,\dots,L_d}(X)\sum\limits_{\omega\in Y^{(1)}}\degH(\omega)d(\lvert\cdot\rvert_{\shfL,\omega},\lvert\cdot\rvert_{\shfL',\omega})
\end{aligned}$$
\end{prop}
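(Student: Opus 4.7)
My plan is to exploit the triviality of $\shfL^\vee\ot\shfL'$ on the generic fiber: its canonical rational section has a purely vertical divisor whose component multiplicities can be controlled pointwise by the local distances, and the claimed inequality will then reduce to a cycle-theoretic bookkeeping based on the algebraic-family-of-cycles computation of subsection \ref{sect_alg_cyc}. Concretely, the identification $\shfL_K\simeq L\simeq\shfL'_K$ furnishes a canonical rational section $s$ of $\shfL^\vee\ot\shfL'$ which on the generic fiber is the unit section of $\shO_X$; its Weil divisor is purely vertical, and I will write $\mathrm{div}(s)=\sum_{\omega\in Y^{(1)}}\sum_W n_W W$, where the $W$ run over the irreducible components of $\scrX_\omega$ with multiplicities $\lambda_W$ in $[\scrX_\omega]$ as in subsection \ref{sect_alg_cyc}.

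The first key step will be the pointwise bound $|n_W|\leq\lambda_W\cdot d(\lvert\cdot\rvert_{\shfL,\omega},\lvert\cdot\rvert_{\shfL',\omega})$. Each component $W\subset\scrX_\omega$ determines, via the reduction map of the admissible formal model $\hdX_\omega$, a divisorial Berkovich point $\xi_W\in X^{\mathrm{an}}_\omega$ at which the Berkovich valuation is the normalized order $\mathrm{ord}_W/\lambda_W$. Unwinding the definition of the formal metric together with the convention $\lvert f\rvert_\omega=e^{-\mathrm{ord}_\omega f}$ from subsection \ref{subsection_pol_var}, I expect to obtain $\lvert s\rvert_{\shfL^\vee\ot\shfL',\omega}(\xi_W)=e^{-n_W/\lambda_W}$, and the definition of the local distance as a supremum over $X^{\mathrm{an}}_\omega$ then delivers the bound.

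With this in hand I will expand the intersection at the cycle level as
\[c_1(\shfL^\vee\ot\shfL')\cdot c_1(\shfL_1)\cdots c_1(\shfL_d)\cdot\pi^*\mathcal H=\sum_\omega\sum_W n_W\bigl(W\cdot c_1(\shfL_1)\cdots c_1(\shfL_d)\cdot\pi^*\mathcal H\bigr).\]
In the setting relevant to this paper the $\shfL_i$ arise from relatively nef models, so each inner intersection number is nonnegative; writing $d_\omega:=d(\lvert\cdot\rvert_{\shfL,\omega},\lvert\cdot\rvert_{\shfL',\omega})$ and applying the pointwise bound therefore gives
\[\Bigl|\sum_W n_W\bigl(W\cdot c_1(\shfL_1)\cdots c_1(\shfL_d)\cdot\pi^*\mathcal H\bigr)\Bigr|\leq d_\omega\bigl([\scrX_\omega]\cdot c_1(\shfL_1)\cdots c_1(\shfL_d)\cdot\pi^*\mathcal H\bigr).\]
Applying the cycle-family identity of subsection \ref{sect_alg_cyc} to $\alpha=c_1(\shfL_1)\cdots c_1(\shfL_d)$, whose pushforward is $\pi_*\alpha=\deg_{L_1,\dots,L_d}(X)\cdot[Y]$, identifies this right-hand bracket with $\degH(\omega)\cdot\deg_{L_1,\dots,L_d}(X)$, and summing over $\omega\in Y^{(1)}$ produces the claimed inequality.

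The step I expect to be the main obstacle is the local identity $\lvert s\rvert_{\shfL^\vee\ot\shfL',\omega}(\xi_W)=e^{-n_W/\lambda_W}$ at divisorial Berkovich points: it requires carefully tracking how Gubler's formal metric behaves under tensor and dual operations and matching the Berkovich valuation at $\xi_W$ with our chosen normalization of $\lvert\cdot\rvert_\omega$. Once this is established, the remaining argument is a direct application of the cycle-theoretic toolkit assembled in subsection \ref{sect_alg_cyc}.
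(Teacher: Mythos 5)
Your proposal follows the paper's argument essentially step by step: take the canonical rational section $s$ of $\shfL_0:=\shfL^\vee\ot\shfL'$ restricting to $1$ on $X$, so $\mathrm{div}(s)$ is vertical (plus components over codimension at least $2$, which are killed by $\degH\circ\pi_*$); identify $\mathrm{ord}_W(s)/\lambda_W$ with $-\ln\lvert 1\rvert_{\shfL_0,\omega}(\xi_W)$ at the divisorial point $\xi_W$; bound this by the local distance; and collect the $\omega$-term via the cycle-family identity $\pi_*(\alpha\cdot[\scrX_\omega])=\deg_{L_1,\dots,L_d}(X)\cdot[\omega]$ of subsection \ref{sect_alg_cyc}.

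Two points to add. First, before the formula $-\ln\lvert 1\rvert_{\shfL_0,\omega}(\xi_W)=\mathrm{ord}_W(s)/\lambda_W$ can be used you should reduce to the case that $\scrX$ is normal, as the paper does: intersection numbers and model metrics (Proposition \ref{prop_metric_puck}) are invariant under pull-back through the degree-one finite surjective normalization morphism, and without normality the local ring at the generic point $\eta_W$ of $W$ need not be a discrete valuation ring, so $\mathrm{ord}_W$ is not a valuation and the divisorial point $\xi_W$ is not cleanly defined. Second, you rightly flag that passing from $|n_W|\leqslant\lambda_W d_\omega$ to $\bigl|\sum_W n_W\,(W\cdot\alpha\cdot\pi^*\mathcal H)\bigr|\leqslant d_\omega\,([\scrX_\omega]\cdot\alpha\cdot\pi^*\mathcal H)$ requires the local intersection numbers $W\cdot\alpha\cdot\pi^*\mathcal H$ to share a sign: the paper's own chain of inequalities bounds $\bigl|\sum_W a_W b_W\bigr|$ by $\max_W|a_W|\cdot\bigl|\sum_W b_W\bigr|$ and thus carries the same silent assumption. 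Strictly speaking the proposition should assume relative nefness of $\shfL_1,\dots,\shfL_d$; in the actual use in subsection \ref{subsect_arith_inter} (arithmetic intersections of semipositive adelic line bundles) this hypothesis is always satisfied, which is why the omission is harmless there, but your instinct to flag it was correct.
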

\begin{proof}
Since the intersection number and distance functions are invariant under a pull-back through a finite surjective morphism of degree $1$, we can assume that $\scrX$ is normal. 
We may view $\shfL_0:=\shfL^\vee\otimes \shfL'$ as a model of $\O_X$. We take a rational section $s$ of $\shfL_0$ such that $s|_{X}=1$. Then $\mathrm{div}(s)$ have no horizontal part. 
Hence the decomposition defined in subsection \ref{sect_vert_div} is of form $\mathrm{div}(s)=Z_1+Z_2$.
Considering the special fiber $\scrX_\omega:=\scrX\times_{Y}\mathrm{Spec}(\O_{Y,\omega}/\mathfrak{m}_{Y,\omega})$ over $\omega$, we have the reduction map $X_\omega^{\mathrm{an}}\rightarrow \scrX_\omega$ which is surjective and anticontinous. Moreover, for any irreducible component $W$ of $\scrX_\omega$ with multiplicity $\lambda_W$, due to the normality, its generic point $\eta_W$ corresponds to a unique point $\xi_W\in X^{\mathrm{an}}_\omega$ satisfying the following
$$-\ln\lvert 1\rvert_{\shfL_0,\omega}(\xi_W)=\mathrm{ord}_W(s)/\lambda_W.$$
Let $\alpha:=c_1(\shfL_1)\cdots c_1(\shfL_d)$. Then
\begingroup
\allowdisplaybreaks
\begin{align*}
\left\lvert\sum\limits_{W}\mathrm{ord}_W(s)\degH(\pi_*(\alpha\cdot[W]))\right\rvert&=\left\lvert\sum\limits_{W}-\ln\lvert 1\rvert_{\shfL_0,\omega}(\xi_W)\lambda_W\degH(\pi_*(\alpha\cdot[W]))\right\rvert\\
&\leqslant\max\limits_{x\in X^{\mathrm{an}}_{\omega}}{\lvert 1\rvert_{\shfL_0,\omega}}(x)\lvert\degH(\pi_*(\alpha\cdot [\sum \limits_{W}\lambda_{W}W]))\rvert\\
&=\max\limits_{x\in X^{\mathrm{an}}_{\omega}}{\lvert 1\rvert_{\shfL_0,\omega}}(x)\lvert\degH(\pi_*(\alpha\cdot[\scrX_\omega]))\rvert\\
&=\max\limits_{x\in X^{\mathrm{an}}_{\omega}}{\lvert 1\rvert_{\shfL_0,\omega}(x)}\degH(\omega)\rvert \mathrm{deg}_{L_1\cdots L_d}(X)\rvert
\end{align*}
\endgroup
where the last equality is due to subsection \ref{sect_alg_cyc}. Take a summation of above equation on $\omega\in Y^{(1)}$, and combine with the fact that $\degH(\pi_*(\mathrm{div}(s)\cdot\alpha))=\degH(\pi_*([Z_1]\cdot \alpha))$, we conclude the proof.
\end{proof}

For any positive integers $n$ and $m$, we may assume that $\scrX_n=\scrX_m=\scrX$ and denote by $\pi$ the morphism $\scrX\rightarrow Y$.
\begingroup
\allowdisplaybreaks
\begin{align*}
&\left\lvert \frac {c_1(\shfL_{0,n}\cdots\shfL_{d,n})\cdot\pi^*\mathcal H}{l_{0,n}\cdots l_{d,n}}-\frac{c_1(\shfL_{0,m}\cdots\shfL_{d,m})\cdot\pi^*\mathcal H}{l_{0,m}\cdots l_{d,m}}\right\rvert\\
    &=\frac{\left\lvert c_1(\shfL_{0,n}^{\ot l_{0,m}})\cdots c_1(\shfL_{d,n}^{\ot l_{d,m}})\cdot\pi^*\mathcal H- c_1(\shfL_{0,m}^{\ot l_{0,n}})\cdots c_1(\shfL_{d,m}^{\ot l_{d,n}})\cdot\pi^*\mathcal H\right\rvert}{l_{0,n}\cdots l_{d,n} l_{0,m}\cdots l_{d,m}}\\
    &\leqslant\frac{\sum\limits_{0\leqslant i\leqslant d}\left\lvert c_1(\shfL_{0,n}^{\ot l_{0,m}})\cdots c_1(\shfL_{i,n}^{\ot l_{i,m}}\ot(\shfL_{i,m}^{\vee})^{l_{i,n}})\cdots c_1(\shfL_{d,m}^{\ot l_{d,n}})\cdot\pi^*\mathcal H\right\rvert}{l_{0,n}\cdots l_{d,n} l_{0,m}\cdots l_{d,m}}\\
    &\leqslant \sum\limits_{0\leqslant i\leqslant d}\left(\deg_{L_0\cdots L_{i-1}L_{i+1}\cdots L_d}(X)\sum_{\omega\in Y^{(1)}}\degH(\omega)d\left(\lvert\cdot\rvert_{\shfL_{i,n}}^{1/l_{i,n}},\lvert\cdot\rvert_{\shfL_{i,m}}^{1/l_{i,m}}\right)\right).
\end{align*}
\endgroup
Therefore the sequence $\displaystyle{\left\{\frac{c_1(\shfL_{0,n})\cdots c_1(\shfL_{d,n})\cdot\pi_n^*{\mathcal H}}{l_{0,n}\cdots l_{d,n}}\right\}}$ in the definition is a Cauchy sequence.
In the same manner we can prove that the definition is independent of the choice of the defining sequence, so our definition is well-defined.
In general, we consider a closed subvariety $Z\subset X$ of dimension $d'\leqslant d$. Then for any semipositive line bundle $\overline L=(L,\{\lvert\cdot\rvert_{\omega}\})$ with the defining sequence $\{(\scrX_n,\shfL_n, l_n)\}$, the restriction $\overline L|_Z:=(L|_Z,\{\lvert\cdot\rvert_\omega|_{Z_\omega^\mathrm{an}}\})$ is a semipositive line bundle on $Z$ defined by $\{(\mathscr{Z}_n,\shfL_n|_{\mathscr{Z}_n},l_n)\}$ where each $\mathscr{Z}_n$ is the closure of $Z$ in $\mathscr{X}_n$. Moreover, we consider semipositive line bundles $\overline L_0,\dots,\overline L_{d'}$, and a purely $d'$-dimensional cycle $\alpha=\sum n_i Z_i$. The intersection number is defined by
$$\widehat c_1(\overline L_0)\cdots\widehat c_1(\overline L_{d'})\cdot \alpha:=\sum n_i \widehat c_1(\overline L_0|_{Z_i})\cdots\widehat c_1(\overline L_{d'}|_{Z_i})$$
We can easily extend this definition to integrable line bundles by linearity.
Let $\overline L$ be a semipositive line bundles. Then we define the \textit{height} of $\alpha$ with respect to $\overline L$ by
$$h_{\overline L}(\alpha)=\frac{\widehat c_1(\overline L)^{d'+1}\cdot [\alpha]}{\deg_{L}(\alpha)(d'+1)},$$
where $\deg_L(\alpha)=c_1(L)^{d'}\cdot\alpha$. 

A frequently used result about the relationship between heights and asymptotic minimal slopes is the following:
\begin{prop}\label{prop_min_slop_nef}
Let $\scrX\rightarrow Y$ be a projective and surjective morphism of normal and projective $k$-varieties. Let $\shfL$ be a nef line bundle over $\scrX$ with $\shfL_K$ being ample. Then we have
 $$\mu^{\mathrm{asy}}_{\min}(\shfL)\geqslant 0$$
\end{prop}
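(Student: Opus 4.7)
The plan is to establish the stronger pointwise statement that $\mu_{\min}(\pi_*\shfL^{\ot n})\geqslant 0$ for all $n$ sufficiently large; dividing by $n$ and taking $\liminf$ then immediately yields $\mu_{\min}^{\mathrm{asy}}(\shfL)\geqslant 0$. The main input is a weak positivity theorem in the spirit of Viehweg.

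First I would invoke Viehweg's weak positivity theorem: since $k$ has characteristic $0$, $\shfL$ is nef on $\scrX$, and $\pi$ is surjective and projective between normal projective $k$-varieties, each $\pi_*(\shfL^{\ot n})$ is a weakly positive torsion-free coherent sheaf on $Y$ (the ampleness of $\shfL_K$ ensures $\pi_*(\shfL^{\ot n})\neq 0$ for $n\gg 0$). Because Viehweg's theorem is usually stated under smoothness hypotheses, one first passes to a resolution $\rho:\widetilde{\scrX}\rightarrow\scrX$: the pullback $\rho^*\shfL$ remains nef, and by the projection formula combined with $\rho_*\O_{\widetilde{\scrX}}=\O_\scrX$ (which holds since $\scrX$ is normal), we have $(\pi\circ\rho)_*((\rho^*\shfL)^{\ot n})=\pi_*(\shfL^{\ot n})$, so weak positivity transfers.

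Next I would deduce $\mu_{\min}(\pi_*\shfL^{\ot n})\geqslant 0$ from weak positivity. Any torsion-free quotient $\shfG$ of a weakly positive sheaf inherits weak positivity: for every ample line bundle $A$ on $Y$ and every integer $k\geqslant 1$, there exists $m\geqslant 1$ such that $\mathrm{Sym}^{mk}(\shfG)\ot A^{\ot m}$ is generated by global sections on a dense open subset of $Y$. A torsion-free, generically globally generated coherent sheaf $\shfF$ has $\degH(\shfF)\geqslant 0$: the corresponding map $\O_Y^{\oplus N}\rightarrow\shfF$ has kernel a torsion-free subsheaf of the semistable sheaf $\O_Y^{\oplus N}$ (hence $\mu_{\max}\leqslant 0$, so $\degH\leqslant 0$), so the image has non-negative degree, while the torsion cokernel contributes non-negatively by Remark \ref{rema_pos_tor}. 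Applying this to $\mathrm{Sym}^{mk}(\shfG)\ot A^{\ot m}$ and using the identity $\mu(\mathrm{Sym}^{mk}\shfG\ot A^{\ot m})=mk\,\mu(\shfG)+m\,\degH(A)$ gives $\mu(\shfG)\geqslant -\degH(A)/k$; letting $k\rightarrow\infty$ yields $\mu(\shfG)\geqslant 0$. Taking the infimum over all such torsion-free quotients gives $\mu_{\min}(\pi_*\shfL^{\ot n})\geqslant 0$.

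The main obstacle is invoking Viehweg's theorem in the required generality: most references formulate it under smoothness hypotheses or only for relative dualizing sheaves. I would address this either by citing a sufficiently general form in the recent literature or by the resolution argument sketched above; the remaining slope-theoretic manipulations are routine given the preliminaries in Section \ref{sec_pre_alg_geo}.
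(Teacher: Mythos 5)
Your proposal takes a genuinely different route from the paper. The paper's proof is very short: it passes to the associated adelic line bundle $\ovl L=(\shfL_K,\{\lvert\cdot\rvert_{\shfL,\omega}\})$, invokes a positivity result for nef adelic line bundles (\cite[Theorem 5.2.23]{chen2022hilbert}), and transfers back via Proposition~\ref{prop_min_slop_inv}, which shows $\mu_{\min}^{\mathrm{asy}}(\shfL)$ equals the asymptotic adelic minimal slope. Your argument stays entirely in algebraic geometry via weak positivity, which would be a pleasant self-contained alternative if it worked.

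However, there is a genuine gap at the crucial step. You invoke ``Viehweg's weak positivity theorem'' to claim that $\pi_*(\shfL^{\otimes n})$ is weakly positive whenever $\shfL$ is nef. This is not what Viehweg's theorem says: the classical statements (Viehweg, Koll\'ar, Fujita, and the later refinements by Fujino, Popa--Schnell, etc.) give weak positivity for direct images of the form $\pi_*(\omega_{\scrX/Y}^{\otimes \nu}\otimes\shfM)$ with $\shfM$ nef or semi-ample; the relative dualizing sheaf twist is essential, both in the original Hodge-theoretic proofs and in the analytic $L^2$-metric approach of Berndtsson et al. You acknowledge this concern at the end (``most references formulate it\ldots only for relative dualizing sheaves''), but the proposed fixes do not close the gap: resolving singularities reduces to the smooth case, and passing to that case still does not put you in the situation Viehweg's theorem covers. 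Your secondary reasoning (weakly positive torsion-free sheaves have $\mu_{\min}\geqslant 0$, via generic global generation of $\mathrm{Sym}^{mk}\shfG\otimes A^{\otimes m}$) is correct, so the issue is isolated to the unsupported positivity input.

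The argument can likely be repaired by using the $\omega$-twisted form of the theorem together with a multiplication map: since $\shfL_K$ is ample, for some fixed $n_0\gg 0$ the natural map $\pi_*(\shfL^{\otimes n_0}\otimes\omega_{\scrX/Y}^{\vee})\otimes\pi_*(\omega_{\scrX/Y}\otimes\shfL^{\otimes (n-n_0)})\rightarrow\pi_*(\shfL^{\otimes n})$ is generically surjective; the middle factor has $\mu_{\min}\geqslant 0$ by the standard Viehweg--Fujino positivity, while the first contributes a fixed constant independent of $n$, so that $\mu_{\min}(\pi_*\shfL^{\otimes n})\geqslant c_0$ for all $n$ large, which suffices after dividing by $n$. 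But as written the proposal relies on a non-standard form of the theorem, so it does not constitute a complete proof.
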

\begin{proof}
Let $\ovl L$ be the adelic line bundle $(\shfL_K,\{\lvert\cdot\rvert_{\shfL,\omega}\})$ associated to $\shfL$. Since $\shfL$ is nef, 
by Proposition \ref{prop_min_slop_inv} and \cite[Theorem 5.2.23]{chen2022hilbert}, we conclude the proof.
\end{proof}

\subsection{Chambert-Loir measures}
\begin{defi}
Let $\phi$ be a model metric of $\O_{X_\omega}$. Let $l$ be a positive integer. We can define a continuous function $f=-\ln\lvert 1\rvert_{\phi}^{1/l}$. Such a function is called a \textit{model function}.
If $l=1$, we denote by $\overline{O(f)}$ the adelic line bundle $(\O_X,\{\lvert\cdot\rvert_{f,\omega'}\}_{\omega'\in Y^{(1)}})$ with $$\lvert\cdot\rvert_{f,\omega'}=\begin{cases}\phi,\text{ if }\omega'=\omega,\\
\text{trivial elsewhere.}\end{cases}$$
\end{defi}
 Due to \cite[Lemma 3.5]{Yuan_2008}, for any formal metric $\phi$ of $\shO_{X,\omega}^{\mathrm{an}}$, there exists a $Y$-model $(\scrX,\O(f))$ of $(X,\O_{X})$, such that $\O(f)$ induces the adelic line bundle $\overline{O(f)}$. For example, take arbitrary model $\pi_0:\scrX_0\rightarrow Y$ of $X$, let $f:\scrX\rightarrow \scrX_0$ be the blowing-up of $\scrX_0$ along $\pi_0^{-1}(\overline{\{\omega\}})$. Let $\pi:=\pi_0\circ f$. Then  $(\scrX,\O_{\scrX}(\pi^{-1}(\overline{\{\omega\}}))$ induces the constant function $f=1$ on $X_\omega^{\mathrm{an}}$. Let $\overline L$ be a semipositive adelic line bundle on $X$, and $Z$ be a $d'$-dimensional closed subvariety of $X$.
By the \textit{Chambert-Loir measure} $\mu_{\overline L,Z,\omega}$ we mean a probability measure such that for any model function $f$,
$$\int_{X_{\omega}^{\mathrm{an}}}f\mu_{\overline L,Z,\omega}=\frac{\widehat c_1(\overline L)^{d'}\cdot\widehat{c}_1(\overline{O(f)})\cdot[Z]}{\degH(\omega)\deg_L(Z)} $$
since model functions are uniformly dense in the space of continuous functions on $X_{\omega}^{\mathrm{an}}$ \cite[Theorem 7.12]{gubler1998local}. In particular, if $Z=X$, we write $\mu_{\overline L,\omega}:=\mu_{\overline L,X,\omega}$.
\begin{rema}
Let $\rho:X'\rightarrow X$ be a generically finite surjective morphism. We can compute that $$\frac{\widehat c_1(\rho^*\overline L)^d \widehat c_1(\overline{O(\rho^*f)})}{\degH(\omega)\deg_{\rho^*L}(X')}=\frac{\deg(\rho)\widehat c_1(\overline L)^d\widehat c_1(\overline{O(f)})}{\deg(\rho)\degH(\omega)\deg_L(X)}=\frac{\widehat c_1(\overline L)^d\widehat c_1(\overline{O(f)})}{\degH(\omega)\deg_L(X)}$$
holds for any model function $f:X^{\mathrm{an}}_\omega\rightarrow \R$. Therefore $\mu_{\overline L,\omega}=\rho_*\mu_{\rho^*\overline L,\omega}$.
\end{rema}

\section{A relative bigness inequality}\label{sec_rel_big}
Let $\pi:\scrX\rightarrow Y$ be a surjective and projective morphism of projective $k$-varieties where $\scrX$ is of dimension $d+e$ ($d \in \N$). 
\subsection{Relative version of Siu's inequality}
We begin with an easy application of Riemann-Roch theorem.
\begin{lemm}[Relative asymptotic Riemann-Roch theorem]\label{lemm_asy_rel_RR}
Let $\shfF$ be a coherent sheaf on $\scrX$. Let $\shfL$ be a relatively ample line bundle on $\scrX$. Then
\begin{equation}\label{eq_rel_asy_RR}
    \degH(\pi_*(\shfF\ot \shfL^{\ot n}))=\rank(\shfF)\frac{c_1(\shfL)^{d+1}\cdot\pi^*\mathcal H}{(d+1)!}n^{d+1}+O(n^{d})
\end{equation}
\end{lemm}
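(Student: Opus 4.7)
The strategy is to reduce to a Riemann--Roch calculation on a smooth projective curve obtained by cutting $Y$ down with general hyperplane sections. By multilinearity of $\degH$, one may replace each $H_i$ by a sufficiently large power and thereby assume every $H_i$ is very ample; both sides of \eqref{eq_rel_asy_RR} scale compatibly. Replacing $\shfF$ by $\shfF/T(\shfF)$ only changes the left-hand side by $\degH(\pi_*(T(\shfF)\ot \shfL^{\ot n}))$, which is $O(n^d)$ since $T(\shfF)$ has support of dimension $\leqslant d+e-1$. Hence one may assume $\shfF$ is torsion-free of rank $r$.

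By Bertini applied to the normal variety $Y$, general members $D_i\in |H_i|$ may be chosen so that $Y':=D_1\cap\cdots\cap D_{e-1}$ is a smooth projective curve of some genus $g$, each intermediate intersection is normal, and $\scrX':=\pi^{-1}(Y')$ is an irreducible projective variety of dimension $d+1$ on which $\shfL':=\shfL|_{\scrX'}$ remains relatively ample. On the $(d+1)$-dimensional $\scrX'$, the classical asymptotic Riemann--Roch (Snapper's lemma) gives
\begin{equation*}
\chi\bigl(\scrX',\,\shfF|_{\scrX'}\ot (\shfL')^{\ot n}\bigr)
 = r\,\frac{c_1(\shfL')^{d+1}}{(d+1)!}\,n^{d+1} + O(n^d),
\end{equation*}
and the leading coefficient matches the one in \eqref{eq_rel_asy_RR} via
$c_1(\shfL')^{d+1} = c_1(\shfL)^{d+1}\cdot c_1(\pi^*H_1)\cdots c_1(\pi^*H_{e-1}) = c_1(\shfL)^{d+1}\cdot\pi^*\mathcal H.$

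Relative Serre vanishing for $\shfL'$ kills $R^{i}\pi'_*(\shfF|_{\scrX'}\ot(\shfL')^{\ot n})$ for $i>0$ and $n\gg 0$, so the Leray spectral sequence collapses and $\chi(\scrX',\shfF|_{\scrX'}\ot(\shfL')^{\ot n}) = \chi(Y',\shfG_n)$, where $\shfG_n := \pi'_*(\shfF|_{\scrX'}\ot(\shfL')^{\ot n})$. Riemann--Roch on the smooth curve $Y'$ yields $\chi(Y',\shfG_n) = \deg_{Y'}(\shfG_n) + \rank(\shfG_n)(1-g)$, and since $\rank(\shfG_n)$ is the $h^0$ of the $d$-dimensional generic fibre of $\pi'$, it is $O(n^d)$ by the fibrewise Hilbert polynomial. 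Combining these, $\deg_{Y'}(\shfG_n)$ has the required $n^{d+1}$-asymptotics.

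It remains to identify $\deg_{Y'}(\shfG_n)$ with $\degH(\pi_*(\shfF\ot\shfL^{\ot n}))$. Flat base change along the complete intersection $Y'\hookrightarrow Y$, valid up to torsion whose degree contributes $O(n^d)$ (one chooses the $D_i$ to avoid the finitely many associated points appearing in the base change maps for the relevant coherent sheaves), identifies $\shfG_n$ with $\pi_*(\shfF\ot\shfL^{\ot n})|_{Y'}$; the projection formula combined with $[Y']=c_1(H_1)\cdots c_1(H_{e-1})$ then gives $\deg_{Y'}(\shfG|_{Y'}) = c_1(\shfG)\cdot c_1(H_1)\cdots c_1(H_{e-1}) = \degH(\shfG)$ for any coherent $\shfG$ on $Y$. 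The main obstacle is precisely this last compatibility step: since $Y$ and $\scrX$ need not be smooth, the torsion phenomena arising from pushforward and base change as $n$ varies must be controlled uniformly, which is achieved by the generic choice of the $D_i$ together with the fact that we only need the identification up to $O(n^d)$ error.
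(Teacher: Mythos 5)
Your approach is genuinely different from the paper's. The paper applies Grothendieck--Riemann--Roch to $\pi$ over the maximal open $U\subset Y$ that is regular and over which $\pi$ is flat, extracts the codimension-$(e-1)$ component of the resulting Chern-character identity, and finishes by projection formula; no hyperplane cutting is involved. You instead use the classical ``reduction to the curve'' technique: cut $Y$ by general members of $|H_i|$ to a smooth curve $Y'$, run Snapper's lemma on $\scrX'=\pi^{-1}(Y')$ and Riemann--Roch on $Y'$. Both routes are legitimate; the paper's is more direct and bypasses the choice-of-curve bookkeeping, while yours is more elementary (it replaces GRR by curve Riemann--Roch and Snapper).

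The one place your write-up is thinner than it should be is exactly the step you flag yourself: identifying $\deg_{Y'}(\shfG_n|_{Y'})$ with $\degH(\shfG_n)$ up to $O(n^d)$ \emph{uniformly in $n$}. Your phrase ``finitely many associated points appearing in the base change maps for the relevant coherent sheaves'' is not by itself convincing, because a priori the bad locus could vary with $n$ and the torsion lengths could grow faster than $O(n^d)$. The observation that closes this gap, and that you should state explicitly, is the following. After your reduction to $\shfF$ torsion-free, for every $\omega\in Y^{(1)}$ the local ring $\O_{Y,\omega}$ is a DVR (as $Y$ is normal), $\pi^\ast t$ is a nonzerodivisor on the integral $\scrX$ for a uniformizer $t$, and torsion-freeness of $\shfF$ then forces $\shfF$ to be $\O_{Y,\omega}$-flat; so the non-flat locus of $\shfF$ over $Y$ maps into a fixed closed subset of codimension $\geq 2$ in $Y$, \emph{independent of $n$}. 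Combined with the fact that $R^i\pi_\ast(\shfF\ot\shfL^{\ot n})=0$ globally for $i>0$ and $n\gg 0$, this gives a single open $W\subset Y$ with $\operatorname{codim}(Y\setminus W)\geqslant 2$ on which every $\shfG_n$ ($n\gg0$) is locally free and commutes with base change; a general complete-intersection curve $Y'$ then avoids $Y\setminus W$ once and for all, and $\deg_{Y'}(\shfG_n|_{Y'})=c_1(\shfG_n)\cdot[Y']=\degH(\shfG_n)$ exactly, with no error term needed. You should also not assert that $\scrX'$ is irreducible: the divisors $\pi^{-1}(D_i)$ lie in $|\pi^\ast H_i|$, which is only semiample, so Bertini irreducibility need not apply; fortunately Snapper's lemma only needs $\scrX'$ to be projective of pure dimension $d+1$, and the leading coefficient is still governed by the cycle $[\scrX']=c_1(\pi^\ast H_1)\cdots c_1(\pi^\ast H_{e-1})\cap[\scrX]$.
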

\begin{proof}
Let $U$ be the maximal Zariski open set such that $U$ is regular and $\rest{\scrX}{U} \to U$ is flat. 
Note that $\operatorname{codim} (Y \setminus U) \geqslant 2$ and \[R^i\pi_*(\shfF\ot\shfL^{\ot n}|_{\pi^{-1}(U)})\simeq R^i\pi_*(\shfF\ot\shfL^{\ot n})|_U=0\]%
for every $i>0$ and $n\gg 0$. Therefore For $n\gg 0$, Grothendieck-Riemann-Roch theorem leads to
$$\label{eq_GRR}
    \ch(\pi_*(\shfF\ot \shfL^{\ot n})|_U)\cap\Td(U)=\pi_*(\ch(\shfF\ot \shfL^{\ot n}|_{\pi^{-1}}(U))\cap\Td(\pi^{-1}(U))).
$$
We only consider the part of above equation in $A_{e-1}(U)_\Q$, which is
$$c_1(\pi_*(\shfF\ot \shfL^{\ot^n})|_U)=\rank(\shfF)\frac{\pi_*c_1(\shfL|_{\pi^{-1}(U)})^{d+1}}{(d+1)!}n^{d+1}+P(n),$$
where $P(n)$ is a polynomial of degree $d$ with coefficients in $A_{e-1}(U)_\Q$. The flatness gives that $c_1(\shfL|_{\pi^{-1}(U)})^{d+1}=c_1(\shfL)^{d+1}|_{\pi^{-1}(U)}$. By virtue of the fact that $\pi_*(c_1(\shfL)^{d+1}|_{\pi^{-1}(U)})=\pi_* c_1(\shfL)^{d+1}$, we obtain that $$c_1(\pi_*(\shfF\ot \shfL^{\ot^n}))=\rank(\shfF)\frac{\pi_*c_1(\shfL)^{d+1}}{(d+1)!}n^{d+1}+P(n).$$
After taking the intersection multiplicities of both sides with $c_1(H_1)\cdots c_1(H_{e-1})$, we obtain (\ref{eq_rel_asy_RR}) by projection formula.
\end{proof}

\begin{lemm}\label{lemm_deg_birational}
Let $f:\scrX'\rightarrow \scrX$ be a birational proper morphism. Let $\mathscr E$ and $\shfL$ be line bundles on $\scrX$. 
Let $\pi'$ denote the surjective morphism $\pi\circ f$. 
We have $$\degH(\pi_*(\mathscr E\ot \shfL^{\ot n}))\geq \degH(\pi'_*(f^*\mathscr E\ot (f^*\shfL)^{\ot n}))+O(n^{d})$$
\end{lemm}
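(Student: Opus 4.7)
The plan is to reduce the two pushforwards to the same base $\scrX$ via the projection formula, and then to bound the resulting discrepancy sheaf using that it is torsion of controlled growth.

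First, by the projection formula,
\[
\pi'_*(f^*\mathscr{E}\otimes(f^*\shfL)^{\otimes n})=\pi_*\bigl(\mathscr{E}\otimes\shfL^{\otimes n}\otimes f_*\O_{\scrX'}\bigr).
\]
We may assume $\scrX$ is integral (restricting to irreducible components, using additivity of $\degH$). Since $f$ is proper and birational and $\scrX$ is integral, the adjunction unit $\O_\scrX\to f_*\O_{\scrX'}$ is an isomorphism at the generic point of $\scrX$; hence it is injective with cokernel $Q$ a coherent $\O_\scrX$-module whose support has dimension at most $d+e-1$.

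Tensoring the short exact sequence $0\to\O_\scrX\to f_*\O_{\scrX'}\to Q\to 0$ with the locally free sheaf $\mathscr{E}\otimes\shfL^{\otimes n}$ and then applying $\pi_*$ yields
\[
0\to\pi_*(\mathscr{E}\otimes\shfL^{\otimes n})\to\pi'_*(f^*\mathscr{E}\otimes(f^*\shfL)^{\otimes n})\to T_n,
\]
with $T_n$ embedding into $\pi_*(Q\otimes\mathscr{E}\otimes\shfL^{\otimes n})$. Because $f$ is birational, the generic fibers of $\pi$ and $\pi'$ coincide, so the generic stalk of $T_n$ vanishes and $T_n$ is a torsion $\O_Y$-sheaf. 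By Remark \ref{rema_pos_tor} one has $\degH(T_n)\ge 0$, and consequently $\degH(\pi_*(\mathscr{E}\otimes\shfL^{\otimes n}))\le\degH(\pi'_*(f^*\mathscr{E}\otimes(f^*\shfL)^{\otimes n}))$. Thus it remains to show $\degH(T_n)\le C n^{d}$ for some constant $C$ independent of $n$.

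To get this upper bound, I would filter $Q$ by a chain of coherent subsheaves whose successive quotients are generically rank-one sheaves supported on integral subvarieties $W\subset\scrX$ with $\dim W\le d+e-1$; by additivity of $\degH$ on short exact sequences it suffices to prove $\degH(\pi_*(\O_W\otimes\mathscr{E}|_W\otimes\shfL|_W^{\otimes n}))=O(n^{d})$ for each such $W$. Applying Lemma \ref{lemm_asy_rel_RR} to $\pi|_W\colon W\to Y$—after the twist $\shfL\leadsto\shfL\otimes\pi^*A$ with $A$ ample on $Y$ to guarantee the relative ampleness needed, a twist which by \eqref{calc_inter_alg_cyc} changes $\degH(\pi_*(\cdots))$ only by an $O(n^{d})$ perturbation—the relative dimension of $\pi|_W\colon W\to\pi(W)$ is at most $d-1$, hence the degree grows at most like $n^{d}$. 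The main obstacle is precisely this last step: producing a uniform $O(n^{d})$ bound on $\degH$ of pushforwards of sheaves supported in codimension one on $\scrX$, in the absence of any positivity hypothesis on $\shfL$ itself. The twisting trick, together with the filtration of $Q$, absorbs this difficulty and reduces the problem to the relative asymptotic Riemann--Roch theorem already proved.
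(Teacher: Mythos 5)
Your reduction via the projection formula is exactly the paper's opening move, but from there you diverge, and two genuine gaps appear.

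First, the twist $\shfL\leadsto\shfL\otimes\pi^*A$ with $A$ ample on $Y$ cannot "guarantee the relative ampleness needed": a pull-back $\pi^*A$ is trivial on fibres of $\pi$, so tensoring by it changes nothing about relative positivity, and $\shfL$ carries no positivity hypothesis at all in this lemma. What is actually needed, and what the paper does, is a twist by a line bundle $\scrA$ that is ample \emph{on $\scrX$} (chosen so that $\shfL\otimes\scrA$ is ample, $\mu^{\mathrm{asy}}_{\min}(\shfL\otimes\scrA)>0$, and $\scrA$ has a global section $t$ not vanishing on $\mathrm{Supp}(f_*\O_{\scrX'}/\O_\scrX)$). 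And after such a twist, the assertion that it only perturbs $\degH(\pi_*(\cdot))$ by $O(n^d)$ is not free — equation \eqref{calc_inter_alg_cyc} compares top intersection numbers, not pushforward degrees, and linking the two via Lemma~\ref{lemm_asy_rel_RR} is only valid when $\shfL$ is already relatively ample, which it is not. Comparing the twisted and untwisted situations is precisely the point of the injection $\shfF_1(n)\xrightarrow{\cdot\,t^{\ot n}}\shfF_2(n)$ in the paper's proof; you cannot invoke the conclusion to justify the reduction.

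Second, even granting an admissible filtration of $Q$, bounding $\degH(T_n)$ by the degrees of the graded pieces of $\pi_*(Q\otimes\mathscr{E}\otimes\shfL^{\ot n})$ uses that the degree of a subsheaf is controlled by the degree of the ambient sheaf. That fails in general: for a graded piece $\pi_*(Q_i/Q_{i+1}\otimes\cdots)$ with horizontal support, its torsion-free quotient can have arbitrarily negative degree, so $\degH$ of the torsion part can exceed $\degH$ of the whole. To make the comparison one needs $\mu_{\min}\geqslant 0$ (Corollary~\ref{coro_ineq_min_slop}), which is again exactly what the paper arranges: it shows the torsion-free quotient $\shfF_2(n)/T(n)$ has strictly positive minimal slope for $n\gg 0$ (via Proposition~\ref{prop_asymp_min_slop} and the choice of $\scrA$), and splits the degree difference into a torsion part (non-negative by Remark~\ref{rema_pos_tor}) and a torsion-free part (non-negative by the slope estimate). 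Your sketch acknowledges the obstacle but does not supply this mechanism, so as written the proof does not close.
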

\begin{proof}
We need to give an upper bound of the following
{
\allowdisplaybreaks
\begin{align*}
&\degH(\pi'_*(f^*(\mathscr{E}\ot\shfL^{\ot n})))-\degH(\pi_*(\mathscr{E}\ot\shfL^{\ot n}))\\
&=\degH(\pi_*(\mathscr{E}\ot\shfL^{\ot n}\ot f_*\O_{\scrX'}))-\degH(\pi_*(\mathscr{E}\ot\shfL^{\ot n}))\\
&=\degH(\pi_*(\mathscr{E}\ot\shfL^{\ot n}\ot f_*\O_{\scrX'})/\pi_*(\mathscr{E}\ot\shfL^{\ot n})).
\end{align*}
}%
The first equation is due to projection formula.
Let $\scrA$ be an ample line bundle on $\scrX$ such that $\shfL\ot\scrA$ is ample, $\mu_{\min}^{\mathrm{asy}}(\pi_*(\shfL\ot\scrA))>0$, and $\scrA$ admits a global section $t$ not vanishing at any $x\in \mathrm{Supp}(f_*\O_{\scrX'}/\O_{\scrX})$. Then we have an injection 
$$
\begin{aligned}
\pi_*(\mathscr{E}\ot\shfL^{\ot n}\ot f_*\O_{\scrX'})&/\pi_*(\mathscr{E}\ot\shfL^{\ot n})\xrightarrow{\cdot t^{\ot n}}\\ &\pi_*(\mathscr{E}\ot(\shfL\ot\scrA)^{\ot n}\ot f_*\O_{X'})/\pi_*(\mathscr{E}\ot(\shfL\ot\scrA)^{\ot n})
\end{aligned}$$
Denote by $\shfF_1(n)$ the former one, and by $\shfF_2(n)$ the latter one.
We claim that $\degH(\shfF_2(n))\geqslant \degH(\shfF_1(n))$.
If we let $T(n)$ be the torsion part of $\shfF_2(n)$, then $T(n)\cap \shfF_1(n)$ is the torsion part of $\shfF_1(n)$. If $\shfF_2(n)/T(n)$ is zero, then $\shfF_1(n)$ and $\shfF_2(n)$ are both torsion, hence $\degH(\shfF_2(n))\geqslant \degH(\shfF_1(n))$ due to Remark \ref{rema_pos_tor}. Therefore it suffices to consider the case that $\shfF_2(n)/T(n)$ is torsion-free. 
By our assumption on minimal slope, $\mu_{\min}(\shfF_2(n)/T(n))>0$ for every $n\gg0$. We thus have 
{
\allowdisplaybreaks
\begin{align*}
\degH(&\shfF_2(n))-\degH(\shfF_1(n))\\
&=\degH(\shfF_2(n)/T(n))-\degH(\shfF_1(n)/(T(n)\cap\shfF_1(n)))\\
&+\degH(T(n))-\degH(T(n)\cap \shfF_1(n))
\end{align*}
}%
which is non-negative, as required. In consequence,
{
\allowdisplaybreaks
\begin{align*}
&\degH(\pi'_*(f^*(\mathscr{E}\ot\shfL^{\ot n})))-\degH(\pi_*(\mathscr{E}\ot\shfL^{\ot n}))\\
&\leqslant\degH(\pi_*(\mathscr{E}\ot(\shfL\ot\scrA)^{\ot n}\ot f_*\O_{X'}))-\degH(\pi_*(\mathscr{E}\ot(\shfL\ot\scrA)^{\ot n}))\\
&=O(n^d).
\end{align*}}%
The last equation is obtained by using Lemma \ref{lemm_asy_rel_RR}
\end{proof}

\begin{theo}\label{ineq_rel_big} Assume that $d>0$.
Let $\mathscr{E}, \shfL, \shfM$ be line bundles over $\scrX$. If $\shfL$ and $\shfM$ are nef,
then it holds that
 \begin{equation}\label{ineq_main_geo}
    \begin{aligned}
        \degH(&\pi_*(\mathscr{E}\otimes (\shfL\ot \shfM^\vee)^{\ot n}))\geqslant \\
             &\frac{c_1(\shfL)^{d+1}\cdot\pi^*(\mathcal{H})-(d+1)c_1(\shfL)^{d}c_1(\shfM)\cdot\pi^*(\mathcal{H})}{(d+1)!}n^{d+1}+o(n^{d+1}).
    \end{aligned}
 \end{equation}
\end{theo}
\begin{proof}
We will divide the proof into three steps.

\medskip
\textbf{Step 1.} We first show that we can reduce to the case that 
\begin{enumerate}
    \item[\textnormal{(i)}] $\scrX$ is smooth.
    \item[\textnormal{(ii)}] $\shfL$ and $\shfM$ are ample.
    \item[\textnormal{(iii)}] $\mu^{\mathrm{asy}}_{\min}(\shfL)>0$.
\end{enumerate}

Let $f:\scrX'\rightarrow \scrX$ be an resolution of singularities, and $\pi'=\pi\circ f$. Let $\mathscr E'=f^* \mathscr E$, $\shfL'=f^*\shfL$ and $\shfM'=f^*\shfM$.

We claim that we can find an ample line bundle $\scrA$ over $\scrX'$ such that $\mu_{\min}^{\mathrm{asy}}(\shfL'^{\ot m}\ot \scrA)>0$. 
Indeed, let $\scrA_0$ be an ample line bundle over $\scrX'$.
Then, 
$\mu_{\min}^{\mathrm{asy}}(\shfL'^{\ot m}\ot \scrA_0)\geqslant 0$ by Proposition~\ref{prop_min_slop_nef}.
If we set $\scrA = \scrA_0\ot \pi'^*A$ for an ample line bundle $A$ over $Y$, then $\scrA$ is our desired line bundle by $$\mu_{\min}^{\mathrm{asy}}(\shfL'\ot \scrA)=\mu_{\min}^{\mathrm{asy}}(\shfL'\ot \scrA_0)+\degH(A)>0.$$

We denote the inequality \eqref{ineq_main_geo} by $I(\mathscr{E}, \shfL, \shfM)$.
We are going to see that
if \[I(\mathscr E'\ot (\shfL'\ot \shfM'^{\vee})^{\ot r},\shfL'^{\ot m}\ot \scrA,\shfM'^{\ot m}\ot \scrA)\] holds for all
$m\in \N_+, 0\leqslant r<m$, then so does $I(\mathscr{E}, \shfL, \shfM)$.

Set $\shfL_m=\shfL'^{\ot m}\ot \scrA$, $\shfM_m=\shfM'^{\ot m}\ot \scrA$, $\mathscr N=\shfL'\ot \shfM'^{\vee}$ and \[
P(N) =  \degH(\pi'_*(\mathscr E'\ot \mathscr{N}^{\ot N})).\]
Then $I(\mathscr E'\ot (\shfL'\ot \shfM'^{\vee})^{\ot r},\shfL'^{\ot m}\ot \scrA,\shfM'^{\ot m}\ot \scrA)$ means that
\[
\liminf_{n\to\infty} \frac{P(nm+r)}{n^{d+1}} \geqslant 
\frac{(c_1(\shfL_m)^{d+1}-(d+1)c_1(\shfL_m)^{d}c_1(\shfM_m)){\pi'}^*\mathcal H}{(d+1)!},
\]
that is,
\[
\liminf_{n\to\infty} \frac{P(nm+r)}{(nm+r)^{d+1}} \geqslant 
\frac{(c_1(\shfL_m)^{d+1}-(d+1)c_1(\shfL_m)^{d}c_1(\shfM_m)){\pi'}^*\mathcal H}{m^{d+1}(d+1)!}.
\]

Note that, for a sequence $\{ a_n \}_{n=1}^{\infty}$ of real numbers,
\[
\min_{r \in \{ 0, \ldots, m-1 \}} \left\{ \liminf_{n\to\infty} a_{nm+ r} \right\} = \liminf_{n\to\infty} a_n,
\]
so we have
\[    \liminf_{n\to\infty}\frac{P(n)}{n^{d+1}} \geqslant \frac{(c_1(\shfL_m)^{d+1}-(d+1)c_1(\shfL_m)^{d}c_1(\shfM_m)){\pi'}^*\mathcal H}{m^{d+1}(d+1)!}.
\]
Therefore,
\[    \liminf_{n\to\infty}\frac{P(n)}{n^{d+1}} \geqslant \liminf_{m\to\infty} \frac{(c_1(\shfL_m)^{d+1}-(d+1)c_1(\shfL_m)^{d}c_1(\shfM_m)){\pi'}^*\mathcal H}{m^{d+1}(d+1)!}.
\]
Thus, it is sufficient show that
\[
\liminf_{n\to\infty}\frac{P(n)}{n^{d+1}} \leqslant \liminf_{n\to\infty}\frac{\degH(\pi_*(\mathscr{E}\otimes (\shfL\ot \shfM^\vee)^{\ot n}))}{n^{d+1}}
\]
and 
\begin{multline*}
 \lim_{m\to\infty} \frac{(c_1(\shfL_m)^{d+1}-(d+1)c_1(\shfL_m)^{d}c_1(\shfM_m)){\pi'}^*\mathcal H}{m^{d+1}} \\
= (c_1(\shfL)^{d+1}-(d+1)c_1(\shfL)^{d}c_1(\shfM))\cdot\pi^*(\mathcal{H}).
\end{multline*}
The first inequality is due to Lemma \ref{lemm_deg_birational}, and the second assertion is obvious by virtue of the projection formula.

\medskip
\textbf{Step 2.} In this step, we prove that if there is an effective section $s$ of $\shfM$ such that $D:=\mathrm{div}(s)$ is 
a prime divisor, then as $n\rightarrow+\infty,$
\begin{equation}
    \begin{aligned}\label{eq_estimate_diff}
    \degH(\pi_*(\mathscr E\ot\shfL^{\ot n}\ot (\shfM^\vee)^{\ot j-1}))-\degH(\pi_*(\mathscr E\ot\shfL^{\ot n}\ot (\shfM^\vee)^{\ot j}))\\
    \leqslant\frac{c_1(\shfL)^d\cdot D\cdot\pi^*(\mathcal{H})}{d!}n^d+O(n^{d-1})
    \end{aligned}
\end{equation}
where $0\leqslant j\leqslant n$ and $O(n^{d-1})$ is independent of $j$.

Consider the exact sequence:
\begin{align*}
    &0\rightarrow \mathscr E\ot \shfL^{\ot n}\ot(\shfM^\vee)^{\ot j}\xrightarrow{\cdot s}\mathscr E\ot \shfL^{\ot n}\ot(\shfM^\vee)^{\ot j-1}\\ &\kern15em\to (\mathscr E\ot \shfL^{\ot n}\ot (\shfM^\vee)^{\ot j-1})|_D \to 0,
\end{align*}
which induces 
\begin{align*}
    0\to \pi_*(\mathscr E\ot \shfL^{\ot n}\ot (\shfM^\vee)^{\ot j})\xrightarrow{\pi_*(\cdot s)}\pi_*(\mathscr E\ot \shfL^{\ot n}\ot (\shfM^\vee)^{\ot j-1})\\
    \to\pi_*((\mathscr E\ot \shfL^{\ot n}\ot (\shfM^\vee)^{\ot j-1})|_D)\to\cdots.
\end{align*}
If we set $\shfF:=\mathrm{Coker}(\pi_*(\cdot s))$, then
\begin{align*}
    \degH(\pi_*(\mathscr E\ot\shfL^{\ot n}\ot (\shfM^\vee)^{\ot j-1})) -\degH(\pi_*(\mathscr E\ot\shfL^{\ot n}\ot (\shfM^\vee)^{\ot j}))\\ = \degH(\shfF),
\end{align*}
so that we are going to give an upper bound of $\degH(\shfF)$.
Consider the composition of the following two injective morphisms:
$$
\begin{cases}\shfF \rightarrow \pi_*((\mathscr E\ot \shfL^{\ot n}\ot (\shfM^\vee)^{\ot j-1})|_{D}),\\
\pi_*((\mathscr E\ot \shfL^{\ot n}\ot (\shfM^\vee)^{\ot j-1})|_{D})\rightarrow \pi_*(\mathscr E\ot\shfL^{\ot n}|_{D}).
\end{cases}
$$
Since $D$ is ample, we have $\pi(D)=Y$, which
gives that $\shfF$ and $\pi_*(\mathscr E\ot \shfL^{\ot n}|_{D})$ are torsion-free or zero. Hence  $$\begin{aligned}
    \liminf_{n\rightarrow +\infty}\frac{\mu_{\min}(\pi_*(\mathscr E\ot \shfL^{\ot n}|_{D}))}{n}=\liminf_{n\rightarrow +\infty}\frac{
    \mu_{\min}(\pi_*(\mathscr E\ot \shfL^{\ot n}\ot \O_{D}))}{n}\\
    \geqslant \mu_{\min}^{\mathrm{asy}}(\shfL)>0,
\end{aligned}$$
where the first inequality is due to Proposition \ref{prop_asymp_min_slop}(b), and the second inequality is our reduction in Step 1.
Therefore for every sufficiently large $n$,
$$\mu_{\min}(\pi_*(\mathscr E\ot \shfL^{\ot n}|_D))>0.$$
We see that
\begingroup
\allowdisplaybreaks
    \begin{align*}
    \degH(\shfF)&\leqslant\degH(\pi_*(\mathscr E\ot \shfL^{\ot n}|_{D}))
    =\frac{c_1(\shfL|_{D})^d\cdot \pi^*\mathcal H}{d!}n^d+O(n^{d-1})\\
    &=\frac{c_1(\shfL)^d \cdot D\cdot\pi^*(\mathcal{H})}{d!}n^d+O(n^{d-1})
\end{align*}
\endgroup
as $n\rightarrow +\infty$ due to Lemma \ref{lemm_asy_rel_RR}.

\medskip
\textbf{Step 3.} Now we prove the inequality in the general case. As we have seen in Step 1, it suffices to prove that there exists an $m>0$, such that the inequality $I(\mathscr E\ot (\shfL\ot \shfM^{\vee})^{\ot r},\shfL^{\ot m},\shfM^{\ot m})$ holds for any $0\leqslant r<m$.
After taking multiples of $\shfL$ and $\shfM$ simultaneously, there is no loss of generality to assume that $\shfM$ is very ample, and hence there exists a non-zero global section $s$ such that $D= \mathrm{div}(s)$ is irreducible and reduced by
the Bertini irreducible theorems because of the fact $d+e \geqslant 2$.
Note that the infinitesimal $O(n^{d-1})$ in (\ref{eq_estimate_diff}) is independent of the choice of $j$.
Taking a summation of (\ref{eq_estimate_diff}) on $j$ and applying Lemma \ref{lemm_asy_rel_RR}, we obtain that
$$\begin{aligned}
    &\degH(\pi_*(\mathscr E\ot \shfL^{\ot n}\ot (\shfM^\vee)^{\ot n}))=\degH(\pi_*(\mathscr E\ot \shfL^{\ot n}))\\
    &+\sum\limits_{1\leqslant j\leqslant n}\{\degH(\pi_*(\mathscr E\ot \shfL^{\ot n}\ot (\shfM^\vee)^{\ot j}))-\degH(\pi_*(\mathscr E\ot \shfL^{\ot n}\ot (\shfM^\vee)^{\ot j-1})\}\\
    &\kern4em\geqslant \frac{c_1(\shfL)^{d+1}\cdot\pi^*(\mathcal{H})-(d+1)c_1(\shfL)^{d}c_1(\shfM)\cdot\pi^*(\mathcal{H})}{(d+1)!}n^{d+1}+o(n^{d+1})& 
\end{aligned}$$
\end{proof}

Here we give some results related to our latter contents about equidistribution theorem.

\begin{coro}\label{coro_ineq_err}
 Let $\shfL,\shfM_1,\shfM_2$ be nef line bundles on $\scrX$ such that $(\shfM_2)_K=(\shfM_1)_K$.
 Then there exists an function $F:\N\rightarrow \R$ such that $F(m)=O(m^{d-1})$ as $m\rightarrow +\infty$, and that for each fixed $m$, as $n\rightarrow \infty$, we have
 $$\begin{aligned}
&\degH(\pi_*(\shfL^m\ot\shfM_1\ot\shfM_2^\vee)^n)\\
&\geqslant \frac{c_1(\shfL^m\ot \shfM_1\ot \shfM_2^\vee)^{d+1}\cdot\pi^*\mathcal H+F(m)}{(d+1)!}n^{d+1}+o(n^{d+1}).
\end{aligned}$$
\end{coro}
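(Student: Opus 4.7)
The plan is to deduce this corollary directly from Theorem~\ref{ineq_rel_big}. Since $\shfL$ and $\shfM_1$ are both nef, their tensor product $\shfL^{\otimes m}\otimes\shfM_1$ is nef, and $\shfM_2$ is nef by hypothesis, so Theorem~\ref{ineq_rel_big} applies with $\mathscr E=\O_\scrX$ and with $\shfL^{\otimes m}\otimes\shfM_1$ and $\shfM_2$ playing the roles of $\shfL$ and $\shfM$ respectively. For each fixed $m$ this yields the lower bound
$$\degH\!\bigl(\pi_*((\shfL^{\otimes m}\otimes\shfM_1\otimes\shfM_2^\vee)^{\otimes n})\bigr)\geqslant\frac{\bigl(c_1(\shfL^{\otimes m}\otimes\shfM_1)^{d+1}-(d+1)c_1(\shfL^{\otimes m}\otimes\shfM_1)^{d}c_1(\shfM_2)\bigr)\cdot\pi^*\mathcal H}{(d+1)!}n^{d+1}+o(n^{d+1}).$$

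The only remaining task is to rewrite the leading coefficient in terms of $c_1(\shfL^{\otimes m}\otimes\shfM_1\otimes\shfM_2^\vee)^{d+1}\cdot\pi^*\mathcal H$. Setting $\ell:=c_1(\shfL^{\otimes m}\otimes\shfM_1)$ and $\mu:=c_1(\shfM_2)$, one defines
$$F(m):=\bigl(\ell^{d+1}-(d+1)\ell^{d}\mu-(\ell-\mu)^{d+1}\bigr)\cdot\pi^*\mathcal H,$$
so that the Siu-type inequality above becomes precisely the claimed inequality of the corollary. Expanding $(\ell-\mu)^{d+1}$ by the binomial theorem, the contributions of $\ell^{d+1}$ and of the linear-in-$\mu$ term $(d+1)\ell^{d}\mu$ cancel, leaving
$$F(m)=-\sum_{k=2}^{d+1}\binom{d+1}{k}(-1)^{k}\,\ell^{d+1-k}\mu^{k}\cdot\pi^*\mathcal H.$$

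The last step is the growth estimate for $F(m)$. Since $\ell=m\,c_1(\shfL)+c_1(\shfM_1)$ is affine-linear in $m$, each intersection number $\ell^{d+1-k}\mu^{k}\cdot\pi^*\mathcal H$ is a polynomial in $m$ of degree at most $d+1-k$, and for $k\geqslant 2$ this degree is at most $d-1$. Therefore $F(m)=O(m^{d-1})$, which completes the proof. No genuine obstacle arises: the entire argument is a mechanical consequence of Theorem~\ref{ineq_rel_big}, once one observes that Siu's linear-in-$\mu$ correction exactly absorbs the $m^{d}$-order contribution of the binomial expansion of $(\ell-\mu)^{d+1}$, so that all remaining discrepancies lose at least one power of $m$.
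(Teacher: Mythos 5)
Your proposal is correct and follows essentially the same route as the paper: apply Theorem~\ref{ineq_rel_big} with $\shfL^{\otimes m}\otimes\shfM_1$ in place of $\shfL$ and $\shfM_2$ in place of $\shfM$, set $F(m)$ to be exactly the difference between Siu's lower bound and $c_1(\shfL^{\otimes m}\otimes\shfM_1\otimes\shfM_2^\vee)^{d+1}\cdot\pi^*\mathcal H$, and observe the $O(m^{d-1})$ growth. The paper states the growth estimate without justification; your binomial-expansion argument (cancellation of the $k=0,1$ terms, leaving only intersection numbers of degree at most $d-1$ in $m$) fills in that detail cleanly.
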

\begin{proof}
For any fixed $m\gg 0$,
applying the Theorem \ref{ineq_rel_big} to $\shfL^{\ot m}\ot \shfM_1$ and $\shfM_2$ yields the inequality
$$\begin{aligned}
&\degH(\pi_*(\shfL^{\ot m}\ot \shfM_1\ot \shfM_2^\vee)^n)\geqslant \\
&\frac{(c_1(\shfL^{\ot m}\ot \shfM_1)^{d+1}-(d+1)c_1(\shfL^{\ot m}\ot \shfM_1)^dc_1(\shfM_2))\cdot\pi^*\mathcal{H}}{(d+1)!}n^{d+1}+o(n^{d+1}).
\end{aligned}$$
Let $$\begin{aligned}
F(m):=&(c_1(\shfL^{\ot m}\ot \shfM_1)^{d+1}-(d+1)c_1(\shfL^{\ot m}\ot\shfM_1)^dc_1(\shfM_2)\\
&-c_1(\shfL^{\ot m}\ot \shfM_1\ot\shfM_2^\vee)^{d+1})\cdot \pi^*\mathcal{H}.
\end{aligned}$$
Then $F(m)\sim O(m^{d-1})$ as $m\rightarrow +\infty$.
\end{proof}
\begin{rema}\label{rema_err_term}
Note that $F(m)$ above depends on $\shfL$, $\shfM_1$ and $\shfM_2$. Then we have an estimate that
$$F(m)\leqslant 3^{d+1}\max\limits_{a+b+c=d+1}\{c_1(\shfL)^a c_1(\shfM_1)^b c_1(\shfM_2)^c\cdot\pi^*\mathcal{H}\}m^{d-1}.$$
So in the following content, we denote that
$$C(\shfL,\shfM_1,\shfM_2):=3^{d+1}\max\limits_{a+b+c=d+1}\{c_1(\shfL)^a c_1(\shfM_1)^b c_1(\shfM_2)^c\cdot\pi^*\mathcal{H}\}.$$
\end{rema}

\section{Equidistribution over function fields}\label{sec_equi}
In this section, we assume that $Y$ is a projective regular curve over $k$. In this case, we do not need a polarization $\mathcal H$. For each closed point $\omega\in Y$, we simply denote by $\deg(\omega)$ the degree $[\kappa(\omega):k]$.
\subsection{Equidistribution of subvarieties}
Let $X$ be an integral projective variety over the function field $K$. Let $I$ be an infinite directed set. A net $\{Z_{\iota}\}_{\iota\in I}$ of subvarieties is a family of closed subvarieties $Z_\iota\subset X$ indexed by $I$. We say $\{Z_\iota\}$ is \textit{generic} if for any proper closed subset $Z\subset X$, there exists an $\iota_0$, such that $Z_\iota \not\subset Z$ for any $\iota>\iota_0$.
\begin{defi}
Let $\ovl L$ be a semipositive adelic line bundle. We say a generic net $\{Z_\iota\}_{\iota \in I}$ of closed subvarieties is equidistributed with respect to $\ovl L$ if for any $\omega\in Y^{(1)}$, the family of probability measures $\{\mu_{\ovl L,Z_\iota,\omega}\}_{\iota\in I}$ indexed by $I$ weakly converges to $\mu_{\ovl L,\omega},$ that is,
\begin{equation}\label{eq_equidistribution}
    \int_{X_\omega^{\mathrm{an}}}f \mu_{\overline L,\omega}=\lim\limits_{\iota\in I} \int_{X_\omega^{\mathrm{an}}}f \mu_{\overline L,Z_\iota,\omega}
\end{equation}
holds for any continuous function $f:X^{\mathrm{an}}_{\omega}\rightarrow \R$.
\end{defi}
\begin{theo}\label{theo_equidistribution}
Let $\overline L$ be an adelic line bundle such that $L$ is ample and $\widehat{c}_1(\overline L)^{d+1}=0$. Moreover, there exists a sequence of triples \[\{(\pi_i:\scrX_i\rightarrow Y,\shfL_i,l_i)\}_{i\in \N_+}\] such that 
\begin{enumerate}
    \item[\textnormal{(a)}] Each $(\scrX_i,\shfL_i)$ is a $Y$-model of $(X,L^{\ot l_i})$, and $\shfL_i$ is nef.
    \item[\textnormal{(b)}] Let $\ovl L_i:=(L,\{\lvert\cdot\rvert_{\shfL_i,\omega}^{1/l_i}\}).$ Then $\lim\limits_{n\rightarrow+\infty} d(\ovl L_i,\ovl L)=0.$
\end{enumerate}
Let $\{Z_\iota\}_{\iota\in I}$ be a generic net of subvarieties of $X$ such that  $\lim\limits_{\iota\in I} h_{\overline L}(Z_\iota)=0.$ Then $\{Z_\iota\}$ is equidistributed with respect to $\ovl L$.
\end{theo}
\begin{proof}
Let $\rho:X'\rightarrow X$ be the normalization of $X$. For any $\iota\in I$, the base change $\rho^{-1}(Z_\iota)\rightarrow Z_\iota$ is finite. We take an irreducible component $Z'_\iota$ of $\pi^{-1}(Z_\iota)$. Then $\rho_*\mu_{\rho^*\overline L,Z'_\iota,\omega}=\mu_{\overline L,Z_\iota,\omega}$ due to the finiteness. It is easy to see that the net $\{Z'_\iota\}_{\iota\in I}$ of subvarieties of $X'$ is also generic, and $\lim\limits_{\iota\in I}h_{\rho^*\overline L}(Z_\iota')=0$. We may thus assume $X$ is normal.

We claim that if $\displaystyle \int_{X_\omega^{\mathrm{an}}}f d\mu_{\overline L,\omega}>0$, then 
$\displaystyle\liminf\limits_{\iota\in I}\int_{X_\omega^{\mathrm{an}}}f\mu_{\overline L,Z_\iota,\omega}\geqslant 0.$

Since the set of model functions is uniformly dense in the space of continuous functions on $X_\omega^{\mathrm{an}}$, we only need to show the claim holds for all model functions. By linearity, we assume that $f$ is a model function given by a $Y$-model $(\scrX,\mathscr{O}(f))$ of $(X,\O_X)$. Consider a decomposition $\O(f)=\shfM_1\ot\shfM_2^\vee$ by ample line bundles $\shfM_1$ and $\shfM_2$. Let $\overline \shfM_1$ and $\overline \shfM_2$ denote the associated adelic line bundles. 

For each $i>0$, by the simultaneous modeling lemma, we may consider $\shfM_1$ and $\shfM_2$ as nef line bundles over $\scrX_n$. 
Now we are going to give an estimate of $C(\shfL_i,\shfM_1,\shfM_2)$ defined in Remark \ref{rema_err_term}.
We set \[
\begin{cases}
C(\overline L):=3^{d+1}\max\limits_{\substack{a+b+c=d+1\\ a,b,c\in\N}}\{\widehat c_1(\overline L)^{a}\cdot\widehat c_1(\overline M_1)^b \cdot\widehat c_1(\overline M_2)^c\},\\
C_L:=3^{d+1}\max\limits_{\substack{a+b+c=d+1\\ a,b,c\in\N}}\{c_1(L)^a c_1(M_1)^b c_1(M_2)^c\}.\\
\end{cases}
\]
For $a,b,c\in \N$ such that $a>0$ and $a+b+c=d+1$, 
Proposition \ref{prop_compar_int} gives  $$\begin{aligned}
c_1(\shfL_i)^a\cdot c_1(\shfM_1^{\ot l_i})^b\cdot c_1(\shfM_2^{\ot l_i})^c-l_i^{d+1}\widehat c_1(\overline L)^a\cdot \widehat c_1(\overline M_1)^b\cdot\widehat c_1(\overline M_2)^c\\
\leqslant al_i^{d+1}d(\overline L,\overline L_i)c_1(L)^{a-1}\cdot c_1(M_1)^b\cdot c_1(M_2)^c.
\end{aligned}$$
Then we can see that 
$$C(\shfL_i, \shfM_1^{\ot l_i},\shfM_2^{\ot l_i})
\leqslant l_i^{d+1}(C(\overline L)+(d+1)d(\overline L,\overline L_i))C_L).$$
Therefore there exists a constant $C$ depending on $\overline L$, $\overline M_1$ and $\overline M_2$ such that $$C(\shfL_i,\shfM_1^{\ot l_i},\shfM_2^{\ot l_i})\leqslant l_i^{d+1}C.$$

Observe that
\begin{align*}
    \widehat c_1(\overline L^{\ot m}\otimes \overline{O(f)})^{d+1}&=m^d(d+1)\widehat{c}_1(\overline L)^{d}\widehat{c}_1(\overline{O(f)})+O(m^{d-1})\\
    &=m^d(d+1)\deg_L(X)\deg(\omega)\int_{X^{\mathrm{an}}_\omega} f\mu_{\overline L,\omega}+O(m^{d-1}).
\end{align*}

Thus we can fix an sufficiently large $m$ such that 
$$\widehat c_1(\overline L^{\ot m}\otimes \overline{O(f)})^{d+1}-C\cdot m^{d-1}>0$$

On the other hand, Proposition \ref{prop_compar_int} gives that 
\begin{align*}
    &\big|1/l_i^{d+1} c_1(\shfL_i^{\ot m}\ot \O(f))^{d+1}-\widehat c_1(\ovl L^{\ot m}\ot \ovl{O(f)})^{d+1})\big|\\
    &\kern 10em=\big| \widehat c_1(\ovl L_i^{\ot m}\ot \ovl{O(f)})^{d+1}-\widehat c_1(\ovl L^{\ot m}\ot \ovl{O(f)})^{d+1}\big|\\
    &\kern 10em\leqslant (d+1)\deg_{L^{\ot m}}(X)d(\ovl L_i^{\ot m},\ovl L^{\ot m})\\
    &\kern 10em = m^{d+1}(d+1) \deg_L(X)d(\ovl L_i,\ovl L).
\end{align*}

Hence for any $\epsilon>0$ small enough, there exists a normal model $(\pi:\scrX\rightarrow Y,\shfL)$ of $(X,L^{\ot l})$ for some $l\in \N_+$ such that 
\begin{itemize}
    \item[\textnormal(i)] $d\left(\overline L,(L,\{\lvert\cdot\rvert_{\shfL,\omega}^{1/l}\})\right)<\epsilon$,
    \item[\textnormal(ii)] $c_1(\shfL^{\ot m}\ot \O(f)^{\ot l})^{d+1}-l^{d+1}C\cdot m^{d-1}>0$,
    \item[\textnormal(iii)] $\shfL$ is nef.
\end{itemize}
We denote by $\mathscr N_{\epsilon}=\shfL^{\ot m}\ot \O(f)^{\ot l}$.
Due to Corollary \ref{coro_ineq_err}, \begin{align*}
    &\deg(\pi_*(\mathscr N_{\epsilon}^{\ot n}))\\
    &\kern 5em\geqslant \frac{c_1(\mathscr N_{\epsilon}^{\ot n})^{d+1}-l^{d+1}C\cdot m^{d-1}}{(d+1)!}n^{d+1}+o(n^{d+1}).
\end{align*}
Therefore for $n\gg0$, $$\deg(\pi_*(\mathscr N_\epsilon^{\ot n}))>\mathrm{rk}(\pi_*(\mathscr N_\epsilon^{\ot n}))(g-1),$$
where $g$ is the genus of $Y$.
The Riemann-Roch theorem shows that
$h^0(Y,\pi_*(\mathscr N_\epsilon^{\ot n}))>0.$
Hence there exists a non-zero section $s$ of $H^0(\scrX,\mathscr N_\epsilon^{\ot n})\simeq H^0(Y,\pi_*(\mathscr N_\epsilon^{\ot n}))$. Since $\{Z_\iota\}_{\iota\in I}$ is generic, there exists an $\iota_0\in I$ such that $Z_\iota\not\subset \mathrm{Supp}(s|_X)$ for every $\iota\geqslant \iota_0$, which yields that $[\overline{Z_\iota}]\cdot \mathrm{div}(s)$ is an effective cycle. The nefness of $\shfL$ gives that $$\begin{aligned}
&n c_1(\shfL)^{\dim Z_\iota}\cdot c_1((\shfL^{\ot m}\ot \O(f)^{\ot l}))\cdot [\ovl{Z_\iota}]\\
&\kern 1.5em
=c_1(\shfL)^{\dim Z_\iota}\cdot c_1((\shfL^m\ot \mathscr{O}(f)^{\ot l})^n)\cdot[\overline{Z_\iota}]=c_1(\shfL)^{\dim Z_\iota}\cdot \mathrm{div}(s)\cdot[\overline{Z_\iota}]\geqslant 0.
\end{aligned}$$
Let $\overline L'$ denote the adelic line bundle $\big(L,\{\lvert\cdot\rvert_{\shfL,\omega}^{1/l}\}\big)$. Then the inequality above can be interpreted as 
$$\widehat{c}_1(\overline L')^{\dim Z_\iota}\widehat c_1(\overline L'\ot\overline{O(f/m)})\cdot [Z_\iota]\geqslant0.$$
Consequently,
$$\begin{aligned}
&h_{\overline L}(Z_\iota)+\deg(\omega)\frac{\int_{X_\omega^\mathrm{an}}f/m \mu_{\overline L,Z_\iota,\omega}}{\dim Z_\iota+1}\\
&\kern 2em=\frac{\widehat{c}_1(\overline L)^{\dim Z_\iota}\widehat c_1(\overline L\ot\overline{O(f/m)})\cdot[Z_\iota]}{(\dim Z_\iota+1)\mathrm{deg}_L(Z_\iota)}\\
&\kern 2em \geqslant
\frac{\widehat{c}_1(\overline L')^{\dim Z_\iota}\widehat c_1(\overline L'\ot\overline{O(f/m)})\cdot [Z_\iota]}{(\dim Z_\iota+1)\deg_L(Z_\iota)}-d(\ovl L'|_{Z_\iota},\ovl L|_{Z_\iota})\\
&\kern 2em \geqslant -d(\ovl L',\ovl L)\geqslant -\epsilon.
\end{aligned}$$
Since $\epsilon$ is an arbitrary positive number and $h_{\overline L} (Z_\iota)\rightarrow 0$, we obtain that
$$\liminf\limits_{\iota\in I}\int_{X_\omega^{\mathrm{an}}}f\mu_{\overline L,Z_\iota,\omega}\geqslant 0$$
holds for any model function satisfying $\int_{X_\omega^{\mathrm{an}}}f \mu_{\overline L,\omega}>0$, which implies that our claim holds for continuous functions due to the uniform density of model functions.

Note that equation \eqref{eq_equidistribution} holds for any constant function. Now for any continuous function $f$ and $m>0$, let $$f_m=f-\int_{X^{\mathrm{an}}_\omega}f \mu_{\ovl L,\omega}-1/m.$$
Then $\displaystyle\int_{X_\omega^{\mathrm{an}}}f_m\mu_{\ovl L,\omega}>1/m>0$. Due to our claim, we have 
$\displaystyle\liminf\limits_{\iota\in I}\int_{X_\omega^{\mathrm{an}}}f_m\mu_{\overline L,Z_\iota,\omega}\geqslant 0,$ which implies that
$$\liminf\limits_{\iota\in I}\int_{X_\omega^{\mathrm{an}}}f\mu_{\overline L,Z_\iota,\omega}\geqslant \int_{X_\omega^{\mathrm{an}}}f \mu_{\overline L,\omega}+1/m.$$
As $m$ is arbitrary, we have 
$$\liminf\limits_{\iota\in I}\int_{X_\omega^{\mathrm{an}}}f\mu_{\overline L,Z_\iota,\omega}\geqslant \int_{X_\omega^{\mathrm{an}}}f \mu_{\overline L,\omega}.$$
Applying the same reasoning to $-f$, we get the reverse inequality, which concludes the proof.
\end{proof}

\subsection{Algebraic dynamical systems}
Let $X$ be a projective $K$-variety, $L$ an ample line bundle on $X$. Assume that there exists an endomorphism $\phi:X\rightarrow X$ such that $\phi^*L\simeq L^{\ot q}$ for some $q\in \N_{>1}$, which is called an \textit{algebraic dynamic system}. Let $\overline L_0=(L,\{\lvert\cdot\rvert_{0,\omega}\})$ be an adelic line bundle induced by some model $(\scrX,\shfL)$ of $(X,L)$. We define $\overline L_i=(L,\{\lvert\cdot\rvert_{i,\omega}\})$ inductively by
$$\lvert\cdot\rvert_{i+1,\omega}:=(\phi^*\lvert\cdot\rvert_{i,\omega})^{1/q}.$$
By \cite[Theorem 9.5.4]{bombieri2007heights}, $\{\overline L_i\}$ converges to an adelic line bundle $\overline L=(L,\{\lvert\cdot\rvert_{\mathrm{can},\omega}\})$, such that $\lvert\cdot\rvert_{\mathrm{can},\omega}:=(\phi^*\lvert\cdot\rvert_{\mathrm{can},\omega})^{1/q}$. Actually, it can be shown that $\overline L$ is a uniform limit of nef line bundles, hence semipositve. We define the \textit{canonical height} by $h_{\mathrm{can}}(\cdot):=h_{\overline L}(\cdot)$. Then $$h_{\mathrm{can}}(\phi(Z))=q h_{\mathrm{can}}(Z)$$ holds for any closed subvariety $Z$. We say $Z$ is \textit{preperiodic} if the set $\{Z,\phi(Z),\phi^2(Z),\cdots\}$ is finite, in which case we have $h_{\mathrm{can}}(Z)=0$. In particular, $h_{\mathrm{can}}(X)=0$. Therefore we can apply the preceeding equidistribution theorem to obtain the equidistribution theorem of generic preperiodic points with respect to an algebraic dynamical system. 

\bibliographystyle{plain}
\bibliography{mybibliography}
\end{document}